\date{}
\newlength{\defbaselineskip}
\newcommand{\setlinespacing}[1]%
           {\setlength{\baselineskip}{#1 \defbaselineskip}}
\newcommand{\actaqed}{\hfill $\actabox$}
{\medskip\noindent \textit{Proof of #1. }}%
{\actaqed \medskip}
\def\cA{{\mathcal A}}
\def\cK{{\mathcal K}}
\def\cT{{\mathcal T}}
\def\cV{{\mathcal V}}
\def\bbN{{\mathbb N}}
\def\bbR{{\mathbb R}}
\def\bbT{{\mathbb T}}
\def\bbZ{{\mathbb Z}}
\def\bF{{\mathbf F}}
\def\bH{{\mathbf H}}
\def\bJ{{\mathbf J}}
\def\bK{{\mathbf K}}
\def\bW{{\mathbf W}}
\def\ba{\mathbf a}
\def\bj{\mathbf j}
\def\bk{\mathbf k}
\def\bp{\mathbf p}
\def\bq{\mathbf q}
\def\br{\mathbf r}
\def\bs{\mathbf s}
\def\btt{\mathbf t}
\def\bx{\mathbf x}
\def\by{\mathbf y}
 \def \<{\langle}
\def\>{\rangle}
\def \e{\varepsilon}
\def \ff{\varphi}
\def\al{\alpha}
\def\bt{\beta}
\def\ga{\gamma}
\def\bt{\beta}
\newtheorem{Theorem}{Theorem}[section]
\newtheorem{Lemma}{Lemma}[section]
\newtheorem{Definition}{Definition}[section]
\newtheorem{Proposition}{Proposition}[section]
\newtheorem{Remark}{Remark}[section]
\numberwithin{equation}{section}
\newcommand{\be}{\begin{equation}}
\newcommand{\ee}{\end{equation}}
\begin{document}

\title{Entropy numbers of classes defined by integral operators}

\author{V. Temlyakov}

\newcommand{\Addresses}{{
  \bigskip
  \footnotesize


 \medskip
  V.N. Temlyakov, \textsc{University of South Carolina, USA,\\ Steklov Mathematical Institute of Russian Academy of Sciences, Russia;\\ Lomonosov Moscow State University, Russia; \\ Moscow Center of Fundamental and Applied Mathematics, Russia.\\  
E-mail:} \texttt{temlyakovv@gmail.com}

}}
\maketitle

\begin{abstract}{In this paper we develop the following general approach. We study asymptotic behavior of the entropy numbers not for an individual smoothness class, how it is usually done, but for the collection of classes, which are defined by integral operators with kernels coming from a given class of functions. Earlier, such approach was realized for the Kolmogorov widths.}
\end{abstract}

\section{Introduction}
\label{In}

Some authors (see, for instance, the book \cite{Tikh}) list the following three important stages (periods) of 
development of classical approximation theory. 

(I) At the first stage, which was started by Chebyshev in 1854, approximation of individual functions 
by elements of a fixed set was studied. At that time the popular methods of approximation included approximation by 
algebraic and trigonometric polynomials and by rational functions. 

(II) It was understood (Weierstrass, de la Vall{\' e}e Poussin, Bernstein, Jackson) that the rate of best approximation by
the trigonometric polynomials of order $n$ with $n$ going to infinity is closely related to the smoothness properties of 
the function under approximation. This motivated the researchers to study asymptotic characteristics of approximation (for instance, best approximations by the trigonometric polynomials of order $n$) not for individual functions but for different smoothness classes. 

(III) Many different methods of approximation were used in approximation theory. Among them approximation by elements of finite-dimensional subspaces, approximation by elements of finite sets, approximation by rational functions, approximation by bilinear forms. Some other approximation methods became popular in numerical analysis, for instance,
approximation by piecewise linear functions. As a result the researchers started to formulate the optimization problems over 
collections of similar (in a certain sense) approximation methods. The concept of the Kolmogorov width (1936) and other 
related asymptotic characteristics were introduced. Thus, at this stage we begin with a given function class and a collection 
of approximation methods. For instance, a smoothness class and a collection of $n$-dimensional linear subspaces. Then, for a given smoothness class we try to find an optimal (in the sense of best approximation in a certain norm) linear subspace of dimension $n$. This leads to the Kolmogorov width. In the case of a collection of finite sets of fixed cardinality we end up with the concept of entropy numbers. 

In this paper we develop the following more general than in (III) approach, which was formulated in \cite{VT31}. 
A typical smoothness class can be defined with a help of an integral operator with a special kernel. For instance, in the case 
of periodic functions the Bernoulli kernel can be used. In \cite{VT31} we suggested to study asymptotic characteristics
(Kolmogorov widths) not for an individual smoothness class but for the collection of classes, which are defined by integral operators with kernels coming from a given class of functions. In a certain sense, this means that we have made in \cite{VT31} the step from stage (III) to a new stage and this step is similar to the step from stage (I) to stage (II). 

We now proceed to the detailed presentation. 

{\bf Problem formulation (general setting).} Let $\Omega^i$, $i=1,2$, be compact sets in $\bbR^d$ with probability measures $\mu_i$ on them.  Let $K(\bx,\by)$ be a measurable with respect to $\mu_1\times\mu_2$ function on $\Omega^1\times\Omega^2\subset \bbR^{2d}$.
Assume that for all $\ff\in L_q(\Omega^2,\mu_2)$   the integral
$$
J_K(\ff):=\int_{\Omega^2}K(\bx,\by)\varphi(\by)d\mu_2
$$
exists for almost all $\bx\in\Omega^1$. 

 For $1\le q\le \infty$   define the class
\be\label{In1}
\bW^K_q :=\left\{f:f = \int_{\Omega^2}K(\bx,\by)\varphi(\by)d\mu_2,\quad\|\varphi\|_{L_q(\Omega^2,\mu_2)}\le 1\right\}.  
\ee
Clearly, the class $\bW^K_q $ is the image of the unit ball of the space $L_q(\Omega^2,\mu_2)$ of the integral operator $J_K$. 

We are interested in the following problem. Assume that a class (a collection) $\bK$ of kernels $K$ is given. Consider a specific asymptotic characteristic $ac_n(\bW^K_q,L_p)$ of classes $\bW^K_q$ in the space $L_p(\Omega^1,\mu_1)$,
$1\le q,p\le \infty$. We want to estimate the following characteristics of the collection $\bK$
$$
ac_n(\bK,L_q,L_p) := \sup_{K\in\bK}  ac_n(\bW^K_q,L_p).
$$

\begin{Remark}\label{InR1} The above characteristic $ac_n(\bK,L_q,L_p)$ allows us to solve the following problem. Suppose that we are interested in an upper bound for the asymptotic characteristic $ac_n(\bW^K_q,L_p)$ and the only available  information about the kernel $K$ is the fact that it belongs to the collection $\bK$.  Then, clearly, the $ac_n(\bK,L_q,L_p)$ provides such an upper bound. 
\end{Remark} 

\begin{Remark}\label{InR2} We can formulate the above problem in a somewhat more general form in the functional analysis language. Let two Banach spaces $X$ and $Y$ be given. Denote $B_X:=\{f\in X\,:\, \|f\|_X\le 1\}$ the unit ball of the space $X$. Let $\bJ$ be a set of some linear bounded operators $J$ from $X$ to $Y$. For a specific asymptotic characteristic $ac_n(J(B_X),Y)$ of classes $J(B_X)$ in the space $Y$ consider the following characteristics of the set (collection) $\bJ$
$$
ac_n(\bJ,X,Y) := \sup_{J\in\bJ}  ac_n(J(B_X),Y).
$$
\end{Remark} 

{\bf Comment \ref{In}.1.} The classical classes $\bW^r_q$ defined in Section \ref{fc} is an example of classes $\bW^K_q $.
In this case $\Omega^i = \bbT^d$ is the $d$-dimensional cube $[0,2\pi]^d$,   and  $\mu_i = (2\pi)^{-d}d\bx$, $i=1,2$, is the normalized Lebesgue measure on $[0,2\pi]^d$. Then we have $\bW^r_q= \bW^{K_r}_q$ with $K_r(\bx,\by) := F_r(\bx-\by)$, where $F_r$ is the multivariate Bernoulli kernel defined in Section \ref{fc}. In the case $d=1$ and $r\in \bbN$ the class 
$\bW^r_q$ is directly related to the class of univariate periodic functions $f$ satisfying the condition $\|f^{(r)}\|_q \le 1$. 
In the case $d>1$ the class $\bW^r_q$ is directly related to the class of multivariate periodic functions with bounded mixed 
derivative. Therefore, the classes $\bW^r_q$ are smoothness classes with the smoothness parameter $r$. 
Classes $\bW^K_q$ appear naturally in other areas of mathematical research. We mention an example from the PDEs
(see, for instance, https://en.wikipedia.org/wiki/Green\%27s\_function and references therein). Consider a differential equation of the form
\be\label{In2}
L u(\bx) =f(\bx),\quad \bx \in \Omega
\ee
where $L$ is the linear differential operator. Let $G(\bx,\by)$ be a Green's function for the operator $L$. Then 
$$
u(\bx) := \int_\Omega G(\bx,\by)f(\by)d\by
$$
is a solution to the equation (\ref{In2}).

The above problem has two important ingredients -- the collection $\bK$ of kernels of integral operators and the asymptotic characteristic $ac_n(\cdot,\cdot,\cdot)$. The above problem is relatively well studied in the case, when the asymptotic characteristic $ac_n$ is the Kolmogorov width $d_n$. The reader can find the corresponding results in \cite{VT31}, \cite{VT32},  and \cite{VT47}. A brief description of some of those results can be found in \cite{VT211}. 
For illustration we formulate one of those results -- Theorem \ref{InT1}.      We need the following notation for $1\le q,p\le\infty$
 \be\label{ksi}
 \xi(q,p):= \left(\frac{1}{q} - \max\left(\frac{1}{2},\frac{1}{p}\right)\right)_+, \quad  (a)_+ :=\max(a,0).
\ee

 We also use the following convenient notations. We use $C$, $C'$ and $c$, $c'$ to denote various positive constants. Their arguments indicate the parameters, which they may depend on. Normally, these constants do not depend on a function $f$ and running parameters usually denoted by $m$, $n$, $k$. We use the following symbols for brevity. For two nonnegative sequences $a=\{a_n\}_{n=1}^\infty$ and $b=\{b_n\}_{n=1}^\infty$ the relation $a_n\ll b_n$ means that there is  a number $C(a,b)$ such that for all $n$ we have $a_n\le C(a,b)b_n$. Relation $a_n\gg b_n$ means that 
 $b_n\ll a_n$ and $a_n\asymp b_n$ means that $a_n\ll b_n$ and $a_n \gg b_n$. 
 For a real number $x$ denote $[x]$ the integer part of $x$.
  
 Here is a known result on the Kolmogorov widths. We present some discussion on the Kolmogorov widths of smoothness classes in Section \ref{D}. Let $X$ be a Banach space and $\bF\subset X$ be a  compact subset of $X$. The quantities  
$$
d_n (\bF, X) :=  \inf_{\{u_i\}_{i=1}^n\subset X}
\sup_{f\in \bF}
\inf_{c_i} \left \| f - \sum_{i=1}^{n}
c_i u_i \right\|_X, \quad n = 1, 2, \dots,
$$
are called the {\it Kolmogorov widths} of $\bF$ in $X$.

 \begin{Theorem}[{\cite{VT32}}]\label{InT1} Let $d=1$ and $\bF^\br_1$ denote one of the classes $\bW^\br_1$ or $\bH^\br_1$ (see the definition in Section \ref{fc} below) of functions of two variables. Then for $1\le q,p \le \infty$ and $\br > (1,1+\max(1/2,1/q))$   we have 
 $$
\sup_{K\in \bF^\br_1}  d_n(\bW^K_q)_p \asymp n^{-r_1-r_2 + \xi(q,p)} 
 $$
 with $\xi(q,p)$ defined in (\ref{ksi}).
 \end{Theorem}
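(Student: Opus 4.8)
The plan is to prove the two matching bounds separately, reducing each to the behaviour of the finite-dimensional Kolmogorov widths of $\ell_q^N$-balls in $\ell_p^N$ (equivalently, of balls of trigonometric polynomials), whose governing exponent is exactly $\xi(q,p)$. The mechanism behind the answer $n^{-r_1-r_2+\xi(q,p)}$ is that the smoothness $r_1$ in $\bx$ controls the output functions directly, while the smoothness $r_2$ in $\by$ is converted into low-rank approximability of the operator $J_K$, so the two exponents combine additively.

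For the upper bound I would first reduce the width to an approximation-number estimate for the operator $J_K$. Since for any operator $T=T_n+R_n$ with $\rank T_n\le n$ one has $d_n(T(B_X),Y)\le \|R_n\|_{X\to Y}$, it suffices to approximate $J_K\colon L_q(\bbT)\to L_p(\bbT)$ by finite-rank operators, uniformly over $K\in\bF^\br_1$. I would decompose the kernel into dyadic Fourier blocks $\delta_{s_1,s_2}(K)$ supported on frequencies $|k_1|\asymp 2^{s_1}$, $|k_2|\asymp 2^{s_2}$; membership $K\in\bF^\br_1$ yields $\|\delta_{s_1,s_2}(K)\|\ll 2^{-r_1 s_1-r_2 s_2}$ in the appropriate $L_1$-type norm. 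Each block operator has rank at most $\min(2^{s_1},2^{s_2})$, and its $L_q\to L_p$ norm is controlled by Nikol'skii inequalities for trigonometric polynomials, which is precisely where a factor of the form $2^{s\xi(q,p)}$ enters. The core of the argument is then a rank-allocation optimization: distribute the budget $n$ among the blocks so as to minimize the residual operator norm. Carrying this out, using $r_1>1$ and $r_2>1+\max(1/2,1/q)$ to guarantee summability across scales, should produce the bound $\ll n^{-r_1-r_2+\xi(q,p)}$.

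For the lower bound it suffices to exhibit a single kernel $K^*\in\bF^\br_1$ with $d_n(\bW^{K^*}_q)_p\gg n^{-r_1-r_2+\xi(q,p)}$. I would take $K^*$ concentrated on one dyadic block at scale $2^m\asymp n$ in both variables, of the form $K^*(\bx,\by)=\lambda\sum_{|k_1|,|k_2|\asymp 2^m} a_{k_1,k_2}e^{i(k_1\bx+k_2\by)}$, with a well-conditioned (e.g.\ near-unitary) coefficient matrix $(a_{k_1,k_2})$ and $\lambda\asymp 2^{-(r_1+r_2)m}$ taken as large as admissibility $K^*\in\bF^\br_1$ allows. One cannot simply use the convolution kernel $F_{r_1+r_2}(\bx-\by)$: its ``remainder'' is a singular measure on the diagonal, so it fails to lie in $\bF^\br_1$, and this is exactly why a genuinely two-dimensional block with a full matrix is needed. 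For such $K^*$ the operator $J_{K^*}$ maps the $L_q$-ball of polynomials in $\by$ onto a scaled, well-conditioned image among polynomials in $\bx$, so that the width of that image in $L_p$ is, after the Nikol'skii normalizations, comparable to $\lambda$ times the finite-dimensional width $d_{cn}(B_q^N,\ell_p^N)$ with $N\asymp 2^m\asymp n$. Invoking the known finite-dimensional estimates (Kashin, Gluskin, Garnaev--Gluskin in the relevant regimes), which contribute the factor $\asymp 2^{m\xi(q,p)}$, and combining with $\lambda\asymp 2^{-(r_1+r_2)m}$ gives the matching lower bound.

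The step I expect to be the main obstacle is the upper-bound rank-allocation together with the vanishing of the logarithmic factors that typically accompany two-parameter hyperbolic-cross sums: keeping the \emph{operator} norm (rather than a Hilbert-space singular-value norm) of the residual under control across the whole two-parameter family of blocks, in every regime of $(q,p)$ encoded in $\xi(q,p)$, is delicate, and it is here that the hypothesis $r_2>1+\max(1/2,1/q)$ is genuinely used. On the lower-bound side the analogous difficulty is verifying that the explicit block kernel is admissible, i.e.\ choosing the coefficient matrix so that it lies in the unit ball of $\bF^\br_1$ while the reduction to $d_{cn}(B_q^N,\ell_p^N)$ remains lossless.
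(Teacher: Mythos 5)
A preliminary point: the paper does not prove Theorem \ref{InT1} at all --- it quotes it from \cite{VT32}, and in Section \ref{D} it indicates the route: the identity $d_n(\bW^K_1,L_p)=\tau_n(K)_{p,\infty}$, which converts the width of the image class into a \emph{best bilinear approximation} of the kernel in a mixed norm, followed by the two-sided bilinear estimates of \cite{VT32}. Your dyadic-block, low-rank-plus-small-norm skeleton is broadly in that spirit, but both of your concrete mechanisms contain genuine gaps.

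The upper bound as you describe it cannot work for general $q$. The reduction ``$d_n(T(B_X),Y)\le\|R_n\|_{X\to Y}$, so it suffices to approximate $J_K$ by finite-rank operators'' caps your method at the approximation numbers $a_{n+1}(J_K)$, i.e.\ at the \emph{linear}-width level of the image class. This is lossless at $q=1$ (there the $L_1\to L_p$ operator norm equals the mixed $(p,\infty)$ kernel norm, and indeed $d_n=\tau_n$), but it provably fails once $q>1$ and $p>2$. Take $q=2$, $p=\infty$: then $\xi(2,\infty)=0$ and the theorem claims $n^{-r_1-r_2}$, yet for the admissible smoothed convolution kernel $K_M=F_{(r_1,r_2)}\ast\varphi$ with $\varphi(u,v)=\cK_M(u-v)$, $\|\varphi\|_1=1$, the operator $J_{K_M}$ is essentially the embedding of (a truncation of) $W^{r_1+r_2}_2$ into $L_\infty$, whose approximation numbers are of order $n^{-r_1-r_2+1/2}$, strictly above the target. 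No rank-allocation scheme inside your framework can cross that floor: the gain encoded in $\max(1/2,1/p)$ is a Kolmogorov-width phenomenon --- Kashin/Gluskin-type width theorems applied \emph{inside} the dyadic blocks, with coefficients chosen per function (equivalently, the factors $v_i(\by)$ chosen pointwise in $\by$ in the bilinear picture) --- not an operator-norm phenomenon. Relatedly, your statement that Nikol'skii inequalities are ``precisely where a factor $2^{s\xi(q,p)}$ enters'' is incorrect: Nikol'skii yields only $(1/q-1/p)_+$-type factors; used alone it gives the exponent $-r_1-r_2+(1/q-1/p)_+$, which is worse than claimed exactly in the Kashin regimes. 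Your opening sentence (reduction to $d_m(B_q^N,\ell_p^N)$) names the right engine, but the mechanism you then set up cannot invoke it.

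On the lower bound, your reason for rejecting $F_{r_1+r_2}(\bx-\by)$ is wrong for the class $\bH^\br_1$. Membership there requires only the mixed-difference bounds, equivalently (Theorem \ref{H}) the block bounds $\|A_{(s_1,s_2)}(K)\|_1\ll 2^{-r_1s_1-r_2s_2}$; for the convolution kernel the blocks vanish unless $|s_1-s_2|\le c$ and then have norm $\ll 2^{-(r_1+r_2)\max(s_1,s_2)}\le 2^{-r_1s_1-r_2s_2}$. This is exactly the computation of Lemma \ref{LL1}, and Section \ref{L} of the paper uses precisely this kernel as the extremal example. With $K=F_{r_1+r_2}(\bx-\by)$ one has $\bW^K_q=\bW^{r_1+r_2}_q$ (univariate), and Theorem \ref{aT4.1} delivers the matching lower bound $n^{-r_1-r_2+\xi(q,p)}$ at once; for the $\bW^\br_1$ version of the class, where your singular-measure objection does apply, the Fej\'er smoothing above restores admissibility at no cost. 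By contrast, your one-block near-unitary kernel leaves the crux unverified: admissibility forces $\lambda\ll 2^{-(r_1+r_2)m}/\|\text{block}\|_{L_1}$, and a dense well-conditioned coefficient matrix has large $L_1$-norm, which depresses $\lambda$ and threatens the bound --- the ``well-conditioned'' and ``admissible with $\lambda\asymp 2^{-(r_1+r_2)m}$'' requirements pull in opposite directions, which is presumably why \cite{VT32} (and this paper) work with the graded multi-block Bernoulli kernel instead.
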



 In this paper we study the case, when the asymptotic characteristic is the entropy numbers and the collection $\bK$ is a class of multivariate periodic functions on $2d$ variables with mixed smoothness. We only discuss the cases $d=1$ and $d=2$ in this paper. We now proceed to the corresponding definitions. 

Let $X$ be a Banach space and let $B_X$ denote the unit ball of $X$ with the center at $0$. Denote by $B_X(y,r)$ a ball with center $y$ and radius $r$: $\{x\in X:\|x-y\|\le r\}$. For a compact set $A$ and a positive number $\e$ we define the covering number $N_\e(A)$:
$$
N_\e(A) := N_\e(A,X) :=\min \{n : \exists y^1,\dots,y^n :A\subseteq \cup_{j=1}^n B_X(y^j,\e)\}.
$$
and the entropy numbers $\e_k(A,X)$:
$$
\e_k(A,X) := \inf \{\e : \exists y^1,\dots ,y^{2^k} \in X : A \subseteq \cup_{j=1}
^{2^k} B_X(y^j,\e)\}.
$$
We refer the reader to \cite{VTbookMA} for detailed information on the entropy numbers of mixed smoothness classes. 

The main result of this paper is the following Theorem \ref{MT1}, which is proved in Section \ref{M}. We present some discussion on the entropy numbers of smoothness classes in Section \ref{D}.

 \begin{Theorem}\label{MT1} Let   $r>1$  be given. Then, we have (classes $\bH^{r,2d}_\bq$ are defined in Section \ref{fc})
 \be\label{Md1}
 \sup_{K\in \bH^{r,2}_{1,\infty}} \e_k(\bW^K_1,L_\infty) \ll k^{-2r} (\log 2k)^{2r};
 \ee
  \be\label{Md2}
 \sup_{K\in \bH^{r,4}_{1,\infty}} \e_k(\bW^K_1,L_\infty) \ll k^{-2r} (\log 2k)^{4r+5/2}.
 \ee
 \end{Theorem}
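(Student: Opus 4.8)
The plan is to read the quantity as the dyadic entropy numbers of the integral operator itself. Since $\bW^K_1=J_K(B_{L_1})$, we have $\e_k(\bW^K_1,L_\infty)=\e_k(J_K\colon L_1\to L_\infty)$, and because every estimate below depends on $K$ only through the defining constants of the class $\bH^{r,2d}_{1,\infty}$, the supremum over $K$ will be controlled automatically. First I would expand the kernel into its Littlewood--Paley / hyperbolic-cross blocks in the $2d$ frequency variables, $K=\sum_{\bs,\btt}\delta_{\bs,\btt}(K)$, where $\bs,\btt\in\bbZ_{\ge 0}^d$ index dyadic frequency shells $\rho(\bs)$ in $\bx$ and $\rho(\btt)$ in $\by$. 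The membership $K\in\bH^{r,2d}_{1,\infty}$ is used precisely to bound each block, $\|\delta_{\bs,\btt}(K)\|\ll 2^{-r(|\bs|+|\btt|)}$ with a constant uniform over the class, in the norm governing the $L_1\to L_\infty$ operator norm; this gives $\|J_{\bs,\btt}\|_{L_1\to L_\infty}\ll 2^{-r(|\bs|+|\btt|)}$ for the block operator $J_{\bs,\btt}$ with kernel $\delta_{\bs,\btt}(K)$.

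The point that upgrades the naive smoothness $r$ to $2r$ is the \emph{rank} of each block. The operator $J_{\bs,\btt}$ factors through the finitely many Fourier coefficients $\{\hat\varphi(-\bl)\}_{\bl\in\rho(\btt)}$, so its rank is at most $\min(|\rho(\bs)|,|\rho(\btt)|)=2^{\min(|\bs|,|\btt|)}$, and its image is a space of trigonometric polynomials with frequencies in $\rho(\bs)$. Thus each block contributes a finite-dimensional set of dimension $2^{\min(|\bs|,|\btt|)}$ and $L_\infty$-radius $2^{-r(|\bs|+|\btt|)}$. Summing the radii along the long direction, say over $|\bs|\ge|\btt|$ with $\btt$ fixed, gives $\sum_{|\bs|\ge|\btt|}2^{-r(|\bs|+|\btt|)}\asymp 2^{-2r|\btt|}$, i.e. a block of dimension $2^{|\btt|}$ and radius $2^{-2r|\btt|}$. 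This is the heart of the matter: the two one-sided smoothnesses combine, through the low rank, into an \emph{effective smoothness} $2r$. I would stress that pure set-membership is not enough here, since a direct Fourier bound only places $\bW^K_1$ inside a constant multiple of a smoothness-$r$ class; the improvement to $2r$ is genuinely a consequence of the thinness of the image of the $L_1$-ball.

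Next I would estimate the entropy of each finite-dimensional block in $L_\infty$, using the standard volumetric entropy numbers of balls of trigonometric polynomials in the uniform norm: for a $D$-dimensional block of radius $R$ one has $\e_m\ll R$ for $m\lesssim D$ and $\e_m\ll R\,2^{-m/D}$ beyond. One then assembles the blocks by the familiar device of distributing the budget as $\sum_{\bs,\btt}k_{\bs,\btt}\le k$ and bounding $\e_k(\bW^K_1,L_\infty)\le\sum_{\bs,\btt}\e_{k_{\bs,\btt}}(J_{\bs,\btt}(B_{L_1}),L_\infty)$, after which a Lagrange-type optimization of the allocation produces the main decay $k^{-2r}$ (the single-parameter effective-smoothness family alone would give exactly this with no logarithm). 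The logarithmic factors enter because, for each $d$, there are many blocks at a given dyadic level $|\bs|+|\btt|\asymp L$, namely $\asymp L^{2d-1}$ of them, and the off-diagonal blocks left at their radius near the critical level $L\asymp\log 2k$ accumulate. It is this accumulation, together with the uniform-norm volumetric correction, that separates the two cases and yields $(\log 2k)^{2r}$ for $d=1$ and $(\log 2k)^{4r+1/2}$ for $d=2$.

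The main obstacle I anticipate is exactly this last step: pinning down the precise powers of the logarithm in \eqref{Md1} and \eqref{Md2}. This requires (i) the correct finite-dimensional entropy numbers of polynomial balls in $L_\infty$, where Marcinkiewicz discretization replaces $L_\infty(\bbT^d)$ by $\ell_\infty$ on a net of size $\asymp 2^{|\bs|}$ but the passage to the uniform norm contributes the extra half-power visible in \eqref{Md2}; and (ii) a careful bookkeeping, in the budget optimization, of how many blocks are fully resolved versus left at their radius across the $\asymp L^{2d-1}$ blocks at the critical level. The decay rate $k^{-2r}$ itself is robust and falls out of the effective-smoothness heuristic above, so the delicate work is entirely in the logarithmic accounting, for which I would lean on the entropy estimates for mixed-smoothness polynomial classes collected in \cite{VTbookMA}.
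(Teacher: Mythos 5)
Your overall architecture (double dyadic decomposition in both groups of variables, a rank bound through the input frequencies, and a budget allocation over blocks) is in the spirit of the paper's proof, but your central quantitative claim is not available, and the error costs exactly the power that the theorem is about. The $L_1\to L_\infty$ operator norm of a block $J_{\bs,\btt}$ equals $\|A_{(\bs,\btt)}(K)\|_\infty$, the sup norm of the block kernel. Membership $K\in\bH^{r,2d}_{1,\infty}$ gives, via Theorem \ref{H}, only the mixed-norm bound $\|A_{(\bs,\btt)}(K)\|_{1,\infty}\ll 2^{-r(\|\bs\|_1+\|\btt\|_1)}$, i.e. control of $\sup_{\by}\|A_{(\bs,\btt)}(K)(\cdot,\by)\|_{L_1}$; this bounds the $L_1\to L_1$ norm of the block, not $L_1\to L_\infty$. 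Upgrading to the sup norm by the Nikol'skii inequality costs the factor $2^{\|\bs\|_1}$ --- this is precisely (\ref{M3}) in the paper, with the loss $2^{-(r-1)\|\bs\|_1}$, and it is where the hypothesis $r>1$ enters. With the corrected radii, your aggregated family has dimension $\asymp 2^{\|\btt\|_1}$ but $L_\infty$-radius $\asymp 2^{-(2r-1)\|\btt\|_1}$ rather than $2^{-2r\|\btt\|_1}$, so the volumetric allocation yields only $k^{-(2r-1)}$ up to logarithms: as written, your scheme loses a full power of $k$, and the rate $k^{-2r}$ does not ``fall out of the effective-smoothness heuristic.''

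The paper avoids this loss by never assigning small $L_\infty$ radii to individual blocks. In Lemma \ref{ML1} the kernel is split at a single scale $u$ with $k\asymp 2^u u^d$: the two parts with $\|\bs^1\|_1\le u$ or $\|\bs^2\|_1\le u$ are merely $O(1)$-bounded in $L_\infty$ (Nikol'skii again, hence $r>1$) and live in subspaces of dimension $\asymp 2^u u^{d-1}\asymp k/\log k$, so Proposition \ref{MP1} with budget $k\asymp n\log n$ makes their entropy contribution decay faster than any power --- no small radius needed. The genuinely small part is the high--high tail with both $\|\bs^1\|_1>u$ and $\|\bs^2\|_1>u$, whose shells are controlled in the $L_1$ norm with no Nikol'skii loss ((\ref{M5})--(\ref{M6})), with decay $2^{-rn}$ starting only at $n_0=2u+1$; the leading radius $2^{-rn_0}\asymp k^{-2r}$ (up to logarithms) is the true source of the doubled smoothness. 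The passage from $L_1$ radii to covering in $L_\infty$ is then performed at the entropy level by the deep, non-volumetric estimates $\e_m(\cT(Q_n)_1,L_\infty)$ of Propositions \ref{MP2} and \ref{MP3}, fed into Theorem \ref{fcT2}; these produce the exponents $2r$ and $4r+1/2$, and the extra half power for $d=2$ comes from the Small Ball Inequality (Talagrand), not from Marcinkiewicz discretization as you suggest. Your ``standard volumetric'' bound $\e_m\ll R\,2^{-m/D}$ is valid only for a ball covered in its own norm and cannot substitute for these results; to close the gap you would have to keep $L_1$ radii for the tail blocks and invoke Propositions \ref{MP2} and \ref{MP3}, at which point your argument becomes essentially the paper's.
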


 We note that similar results with somewhat worse logarithmic factors can be also proved for $d>2$. We do not present these results here because we are still working on improving those logarithmic factors. We note that 
 the asymptotic behavior of the entropy numbers $\e_k(\bW^r_2,L_\infty)$ is closely related to the famous probability 
 problem on the Small Ball Inequality, which is solved in the case $d=2$ (see \cite{Tal}) and still open in the case $d>2$. The reader can find a detailed discussion of history of results on $\e_k(\bW^r_q,L_p)$ in \cite{VTbookMA},
 pp.383--386. Further discussion and some open problems are given in Section \ref{D}.

\section{Function classes}
\label{fc}

First of all we define the vector $L_{\bq}$-norm, $\bq=(q_1,\dots,q_v)$, of functions of $v$ variables $\bx=(x_1,\dots,x_v)$ as
$$
\|f(\bx)\|_{\bq} := \|\cdots\|f(\cdot,x_2,\dots,x_v)\|_{q_1}\cdots\|_{q_v}.
$$

We begin with the definition of classes $\bW^\ba_\bq$ (see, for instance, \cite{VTmon}, p.31, in the case of scalar $q$).
\begin{Definition}\label{fcD1}
In the univariate case, for $a>0$, let
\be\label{Bi8}
F_a(x):= 1+2\sum_{k=1}^\infty k^{-a}\cos (kx-a\pi/2)
\ee
be the Bernoulli kernel and in the multivariate case, for $\ba=(a_1,\dots,a_v) \in \bbR^v_+$, $\bx=(x_1,\dots,x_v)\in \bbT^v$, let
\be\label{Bi8m}
F_\ba(\bx) := \prod_{j=1}^v F_{a_j}(x_j).
\ee
Denote for $\mathbf{1}\le \bq\le \infty$ (we understand the vector inequality coordinate wise)
$$
\bW^\ba_\bq := \{f:f=\varphi\ast F_\ba,\quad \|\varphi\|_\bq \le 1\},
$$
where
$$
( F_\ba \ast \varphi)(\bx):= (2\pi)^{-v}\int_{\bbT^v} F_\ba(\bx-\by) \ff(\by)d\by.
$$
\end{Definition}
The classes $\bW^\ba_\bq$ are classical classes of functions with {\it dominated mixed derivative}
(Sobolev-type classes of functions with mixed smoothness).
 
We now proceed to the definition of the classes $\bH^\ba_\bq := \bH^{\ba,v}_\bq$ of periodic functions of $v$ variables, which is based on the mixed differences (see, for instance, \cite{VTmon}, p.31, in the case of scalar $q$).  
 
\begin{Definition}\label{fcD2}
Let  $\btt =(t_1,\dots,t_v)$ and $\Delta_{\btt}^l f(\bx)$
be the mixed $l$-th difference with
step $t_j$ in the variable $x_j$, that is
$$
\Delta_{\btt}^l f(\bx) :=\Delta_{t_v,v}^l\cdots\Delta_{t_1,1}^l
f(x_1,\dots ,x_v) .
$$
Let $e$ be a subset of natural numbers in $[1,v]$. We denote
$$
\Delta_{\btt}^l (e) :=\prod_{j\in e}\Delta_{t_j,j}^l,\qquad
\Delta_{\btt}^l (\varnothing) := Id \,-\, \text{identity operator}.
$$
We define the class $\bH_{\bq,l}^\ba B$, $l > \|\ba\|_\infty$,   as the set of
$f\in L_\bq(\bbT^v)$ such that for any $e$
\be\label{Bi9}
\bigl\|\Delta_{\btt}^l(e)f(\bx)\bigr\|_\bq\le B
\prod_{j\in e} |t_j |^{a_j} .
\ee
In the case $B=1$ we omit it. It is known (see Theorem \ref{H} below)  that the classes $\bH^\ba_{\bq,l}$ with different $l>\|\ba\|_\infty$ are equivalent. So, for convenience we omit $l$ from the notation. 
\end{Definition}

We now formulate a result, which gives an equivalent description of classes $\bH^\ba_{\bq,l}$. 
 We need some classical trigonometric polynomials. The univariate Fej\'er kernel of order $j - 1$:
$$
\mathcal K_{j} (x) := \sum_{|k|\le j} \bigl(1 - |k|/j\bigr) e^{ikx} 
=\frac{(\sin (jx/2))^2}{j (\sin (x/2))^2}.
$$
The Fej\'er kernel is an even nonnegative trigonometric
polynomial of order $j-1$.  It satisfies the obvious relations
\be\label{FKm}
\| \mathcal K_{j} \|_1 = 1, \qquad \| \mathcal K_{j} \|_{\infty} = j.
\ee
Let $\cK_\bj(\bx):= \prod_{i=1}^v \cK_{j_i}(x_i)$ be the $v$-variate Fej\'er kernels for $\bj = (j_1,\dots,j_d)$ and $\bx=(x_1,\dots,x_v)$.

The univariate de la Vall\'ee Poussin kernels are defined as follows
$$
\cV_m := 2\cK_{2m} - \cK_m.
$$
We also need the following special trigonometric polynomials.
Let $s$ be a nonnegative integer. We define
$$
\mathcal A_0 (x) := 1, \quad \mathcal A_1 (x) := \mathcal V_1 (x) - 1, \quad
\mathcal A_s (x) := \mathcal V_{2^{s-1}} (x) -\mathcal V_{2^{s-2}} (x),
\quad s\ge 2,
$$
where $\mathcal V_m$ are the de la Vall\'ee Poussin kernels defined above.
For $\bs=(s_1,\dots,s_v)\in \bbN^v_0$ define
$$
\cA_\bs(\bx) := \prod_{j=1}^v  \cA_{s_j}(x_j),\qquad \bx=(x_1,\dots,x_v)
$$
and
$$
A_\bs(f) := \cA_\bs \ast f.
$$

The following result is known (see, for instance, \cite{VTmon}, p.32, for the scalar $q$ and \cite{VT32} for the vector $\bq$).

\begin{Theorem}\label{H} Let $f\in \bH^\ba_{\bq,l}$, $\mathbf 1 \le \bq \le \infty$. Then, for $\bs \ge \mathbf 0$
\be\label{H1}
\|A_\bs(f)\|_\bq \le C(\ba,v,l)2^{-(\ba,\bs)}.
\ee
Conversely, from (\ref{H1}) it follows that there exists $B>0$, which does not depend on $f$, such that $f\in \bH^\ba_{\bq,l}B$.
\end{Theorem}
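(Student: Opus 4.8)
The plan is to reduce everything to one-dimensional facts via the tensor-product structure of the kernels $\cA_\bs(\bx)=\prod_{j=1}^v\cA_{s_j}(x_j)$, so that $A_\bs$ acts as a product of univariate block operators, one in each variable. The iterated vector norm $\|\cdot\|_\bq$ is compatible with this: convolution in a single variable $x_j$ does not increase the $L_{q_j}$-norm in that variable and commutes with the norms in the remaining variables (Young's inequality applied coordinatewise), so once each step is carried out for one variable the multivariate estimates follow by iterating over $j=1,\dots,v$. I would first record the univariate tools: $\cV_m=2\cK_{2m}-\cK_m$ has Fourier coefficients $\hat\cV_m(k)=1$ for $|k|\le m$, is supported on $|k|<2m$, and satisfies $\|\cV_m\|_1\le 3$; consequently $\cA_s=\cV_{2^{s-1}}-\cV_{2^{s-2}}$ is spectrally localized to the dyadic band $2^{s-2}\le|k|<2^s$, its coefficient profile is an exact dyadic dilate $\hat\cA_s(k)=\Phi(k2^{-s})$ of one fixed function $\Phi$ supported on an annulus away from the origin, and $\sum_{s=0}^N\cA_s=\cV_{2^{N-1}}$ converges to the identity.

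\textbf{Forward direction.} Fix $\bs$, put $e:=\{j:s_j\ge 2\}$ and $t_j:=2^{-s_j}$ for $j\in e$. On the band $2^{s_j-2}\le|k|<2^{s_j}$ one has $|kt_j|\in(1/4,1)$, so the difference symbol $(e^{ikt_j}-1)^l$ is bounded away from $0$ and $\infty$. I would therefore define, in each variable $j\in e$, the multiplier $\hat G_{s_j}(k):=\hat\cA_{s_j}(k)/(e^{ikt_j}-1)^l$ on the band and $0$ elsewhere, so that $A_{s_j}=G_{s_j}\ast_j\Delta_{t_j,j}^l$. Since $\hat G_{s_j}(k)=\Psi(k2^{-s_j})$ for one fixed smooth symbol $\Psi$ supported on an annulus, $G_{s_j}$ is a fixed dilate of one kernel and $\|G_{s_j}\|_1\le C(\ba,l)$ uniformly in $s_j$. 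The variables with $s_j\in\{0,1\}$ are handled by the uniformly bounded low-frequency projections $A_0,A_1$ (absorbed into the constant). Applying the mixed-difference hypothesis for the subset $e$ and iterating Young's inequality gives
\be
\|A_\bs(f)\|_\bq\le\Bigl(\prod_{j\in e}\|G_{s_j}\|_1\Bigr)\bigl\|\Delta_\btt^l(e)f\bigr\|_\bq\le C(\ba,v,l)\prod_{j\in e}|t_j|^{a_j}=C(\ba,v,l)2^{-(\ba,\bs)},
\ee
which is (\ref{H1}).

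\textbf{Converse direction.} Assuming (\ref{H1}), I would expand $f=\sum_{\bs\ge\mathbf0}A_\bs(f)$ (telescoping de la Vall\'ee Poussin partial sums), fix a subset $e$ and steps $\btt$, and for each $j\in e$ choose $N_j$ with $2^{-N_j}\le|t_j|<2^{-N_j+1}$. Splitting the sum over $\bs$ coordinatewise into the low range $s_j\le N_j$ and the high range $s_j>N_j$ for $j\in e$, I would use two univariate inequalities in each differenced variable: the trivial bound $\|\Delta_{t_j,j}^lg\|_{q_j}\le 2^l\|g\|_{q_j}$ on the high range, and the Bernstein-type bound $\|\Delta_{t_j,j}^lg\|_{q_j}\le c_l(2^{s_j}|t_j|)^l\|g\|_{q_j}$ for $g$ of degree $<2^{s_j}$ on the low range (proved via $\Delta_{t_j,j}^lg=(\Delta_{t_j,j}^l\cV_{2^{s_j}})\ast g$ and the kernel estimate $\|\Delta_{t_j,j}^l\cV_{2^{s_j}}\|_1\le c_l(2^{s_j}|t_j|)^l$). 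For $j\notin e$ no difference is taken and the factor $2^{-a_js_j}$ from (\ref{H1}) is summed directly. Because $l>\|\ba\|_\infty\ge a_j$, each low-range geometric series $\sum_{s_j\le N_j}2^{(l-a_j)s_j}$ is dominated by its top term $2^{(l-a_j)N_j}$, so each differenced variable contributes $|t_j|^l\,2^{(l-a_j)N_j}\asymp|t_j|^{a_j}$, while its high range contributes $\sum_{s_j>N_j}2^{-a_js_j}\asymp|t_j|^{a_j}$. Multiplying the per-variable contributions (the $j\notin e$ factors giving convergent geometric series) yields $\|\Delta_\btt^l(e)f\|_\bq\le B\prod_{j\in e}|t_j|^{a_j}$ for every $e$, i.e. $f\in\bH^\ba_{\bq,l}B$.

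The step I expect to be the main obstacle is the uniform bound $\|G_{s_j}\|_1\le C$ on the difference-inverting kernels in the forward direction, and the dual kernel estimate $\|\Delta_{t}^l\cV_{n}\|_1\le c_l(n|t|)^l$ used in the converse. Both reduce to showing that a Fourier multiplier which is a smooth dilate of one fixed compactly supported symbol has a kernel of dilation-invariant, hence bounded, $L_1$-norm; in the periodic setting this must be verified either through a summation-by-parts (Marcinkiewicz-type) estimate or by an explicit construction of $\Psi$ followed by periodization of its inverse transform, using that periodization does not increase the $L_1$-norm on $\bbT$. Everything else is geometric-series bookkeeping together with the routine passage from the scalar case to the iterated vector norm $\|\cdot\|_\bq$.
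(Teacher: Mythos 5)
Your argument is correct and is essentially the standard proof of this equivalence: the paper itself states Theorem \ref{H} without proof, citing \cite{VTmon} (p.~32) and \cite{VT32}, and the argument given there is exactly your scheme --- inverting the difference symbol $(e^{ikt}-1)^l$ on the dyadic band by a multiplier that is a fixed dilate of one compactly supported symbol (forward direction), and splitting $\sum_\bs \Delta_\btt^l(e)A_\bs(f)$ at $2^{s_j}|t_j|\asymp 1$ with the bounds $\min\bigl(2^l, c_l(2^{s_j}|t_j|)^l\bigr)$ per differenced variable (converse), all lifted to the vector norm $\|\cdot\|_\bq$ coordinatewise via Minkowski/Young. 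The two steps you flag as obstacles are indeed the only nontrivial kernel estimates, and both close by the routes you name (Bernstein in $L_1$ for $\|\Delta_t^l\cV_n\|_1\le c_l(n|t|)^l$, and second-order Abel summation for the uniform $L_1$ bound on the dilated multiplier, noting $\Psi$ is only piecewise smooth rather than smooth, which suffices).
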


The reader can find results on approximation properties of these classes in the books \cite{VTmon}, \cite{VTbookMA}, and \cite{DTU}.
In this paper we consider the case, when $v=2d$, $d\in \bbN$, $\mathbf{1}\le \bq \le \infty$, and $\ba$ has a special form: $a_j = r$ for $j=1,\dots,2d$. In this case we write $\bW^r_\bq$ and $\bH^{r,2d}_\bq$.  Typically, $\bq$ has the form $q_j=h_1$, $q_{j+d}=h_2$, $j=1,2,\dots,d$. In such case we write, for instance, $\bH^{r,2d}_{h_1,h_2}$. 

{\bf Some general classes.} Let $\Omega^i$, $i=1,2$, be compact sets in $\bbR^d$ with probability measures $\mu_i$ on them.  

  Clearly, we have $\bW^r_q = \bW^{K}_q $ with $\Omega^i = \bbT^d$ and $\mu_i$, $i=1,2$, being the normalized on $\bbT^d$ Lebesgue measure and $K(\bx,\by) := F_r(\bx-\by)$.

 We begin with the general definition of classes of our interest (see \cite{VTbookMA}, p.371). Let $X$   be a Banach space.
Suppose a sequence of finite dimensional subspaces $X_n \subset X$, $n=1,\dots $, is given. Define for $a\in \bbR_+$, $b\in \bbR$, the following class 
$$
{\bar \bW}^{a,b}_X:={\bar \bW}^{a,b}_X\{X_n\} := \Big\{f\in X: f=\sum_{n=1}^\infty f_n,\quad  f_n\in X_n, 
$$
$$
 \|f_n\|_X \le 2^{-an}n^{b},\quad n=1,2,\dots\Big\}.
$$

We now formulate a known result on the entropy numbers of classes ${\bar \bW}^{a,b}_X$ (see \cite{VTbookMA}, p.371).
Let $Y\subset X$ be a Banach space.  
 Denote $D_n:=\dim X_n$ and assume that for the unit balls $B(X_n):=\{f\in X_n: \|f\|_X\le 1\}$ we have the following upper bounds for the entropy numbers: there exist real $\al$ and nonnegative   $\ga$ and $\bt\in(0,1]$ such that 
\be\label{EA}
\e_k(B(X_n),Y) \ll n^\al \left\{\begin{array}{ll} (D_n/(k+1))^\bt (\log (4D_n/(k+1)))^\ga, &\quad k\le 2D_n,\\
2^{-k/(2D_n)},&\quad k\ge 2D_n.\end{array} \right.
\ee

\begin{Theorem}\label{fcT1} Assume $D_n \asymp 2^n n^c$, $c\ge 0$, $a>\bt$, and subspaces $\{X_n\}$ satisfy (\ref{EA}). Then
\be\label{fc4}
\e_k(\bar \bW^{a,b}_X\{X_n\},Y) \ll k^{-a} (\log k)^{ac+b+\al}.
\ee
\end{Theorem}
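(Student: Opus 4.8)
The plan is to use the subadditivity of entropy numbers under Minkowski sums together with the decomposition built into the definition of $\bar\bW^{a,b}_X$. Since every $f\in\bar\bW^{a,b}_X$ satisfies $f=\sum_{n\ge1}f_n$ with $f_n\in 2^{-an}n^b B(X_n)$, we have $\bar\bW^{a,b}_X\subseteq\sum_{n\ge1}2^{-an}n^bB(X_n)$, so that for any choice of nonnegative integers $\{k_n\}_{n\ge1}$ with $\sum_n k_n\le k$ the standard inequality $\e_{k+l}(A+B)\le\e_k(A)+\e_l(B)$ gives
\be\label{plan1}
\e_k(\bar\bW^{a,b}_X,Y)\le\sum_{n\ge1}2^{-an}n^b\,\e_{k_n}(B(X_n),Y).
\ee
The whole argument reduces to choosing the allocation $\{k_n\}$ so as to minimize the right-hand side and then reading off the rate from (\ref{EA}).

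First I would locate the critical level. Since $D_n\asymp 2^nn^c$, the relation $D_N\asymp k$ gives $N\asymp\log k$, more precisely $2^N\asymp k/N^c$. At this level I spend $k_N\asymp D_N\asymp k$ bits, which puts us at the boundary $k_N\asymp 2D_N$ of the two regimes in (\ref{EA}); there $(D_N/(k_N+1))^\bt\asymp1$ and $\log(4D_N/(k_N+1))\asymp1$, so $\e_{k_N}(B(X_N),Y)\ll N^\al$. The level-$N$ term of (\ref{plan1}) is therefore $\ll 2^{-aN}N^{b+\al}\asymp k^{-a}N^{ac+b+\al}\asymp k^{-a}(\log k)^{ac+b+\al}$, which is exactly the asserted bound. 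Thus the whole theorem amounts to showing that the remaining levels contribute no more than this single critical term.

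For the lower levels $n\le N$ I would again work in the exponential regime, taking $k_n\asymp 2D_n$, so that $\e_{k_n}(B(X_n),Y)\ll n^\al$ and the level-$n$ contribution is $\ll 2^{-an}n^{b+\al}$; these decay geometrically in $N-n$, so their sum is dominated by the term $n=N$ and stays $\ll k^{-a}(\log k)^{ac+b+\al}$ because $c\ge0$. For the highest levels I set $k_n=0$ beyond a cutoff $M$ and bound the resulting tail directly in $Y$: taking $k=0$ in (\ref{EA}) yields the embedding bound $\|g\|_Y\ll 2^{\bt n}n^{\al+\bt c+\ga}\|g\|_X$ for $g\in X_n$, whence $\|f_n\|_Y\ll 2^{-(a-\bt)n}n^{b+\al+\bt c+\ga}$; since $a>\bt$ this geometric series converges, and choosing $M\asymp\frac{a}{a-\bt}\log k$ (enlarged by a lower-order amount to absorb the surplus logarithmic factors) makes the tail $\ll k^{-a}(\log k)^{ac+b+\al}$. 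The intermediate levels $N<n\le M$ must still be covered, and here $D_n\gg k$ forces the polynomial regime of (\ref{EA}); I would assign a sequence $k_n$ decreasing geometrically from $\asymp k$ at $n\asymp N$, chosen so that each such term is $\ll$ the critical value.

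The main obstacle is precisely this allocation step: one must balance the three ranges (exponential regime for $n\le N$, polynomial regime for $N<n\le M$, zero bits for $n>M$), handle the transition of (\ref{EA}) at $k\asymp 2D_n$, and verify that the total budget satisfies $\sum_n k_n\le Ck$. A constant overspend is harmless, since it changes each $k_n$ by a bounded factor, hence each contribution by a bounded factor in the polynomial regime and negligibly in the exponential one, and is thus absorbed into $\ll$. The hypothesis $a>\bt$ enters exactly to guarantee convergence of the tail series, while the logarithmic exponent $ac+b+\al$ in (\ref{fc4}) is produced entirely by the single critical level $n\asymp\log k$.
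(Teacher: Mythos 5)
Your overall architecture --- the subadditivity bound $\e_k({\bar \bW}^{a,b}_X,Y)\le\sum_n 2^{-an}n^b\,\e_{k_n}(B(X_n),Y)$ with $\sum_n k_n\le k$, a critical level $N$ with $D_N\asymp k$, geometrically decaying budgets in the polynomial regime above $N$, and the embedding bound (i.e.\ (\ref{EA}) at $k=0$) for the tail, with $a>\bt$ ensuring convergence --- is exactly the scheme the paper itself uses to prove its variant Theorem \ref{fcT2} (there $k_l=[D_u2^{\mu(n_0-l)}]$ with $\mu=(a-\bt)/(2\bt)$; Theorem \ref{fcT1} itself is only quoted from \cite{VTbookMA}). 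But your treatment of the levels $n\le N$ contains a genuine error. With $k_n\asymp 2D_n$ you get $\e_{k_n}(B(X_n),Y)\ll n^\al$ and hence a level-$n$ contribution $\asymp 2^{-an}n^{b+\al}$; these terms \emph{grow}, not decay, as $N-n$ increases, since $2^{-an}$ is largest for small $n$. The sum $\sum_{n\le N}2^{-an}n^{b+\al}$ is dominated by $n=1$ and is bounded below by a positive constant, so your allocation only yields $\e_k({\bar \bW}^{a,b}_X,Y)\ll 1$, which is useless. Conceptually: spending $k_n\asymp 2D_n$ bits at a low level only shrinks the radius $2^{-an}n^b$ of that summand by a constant factor, whereas you must drive it below the target $2^{-aN}N^{b+\al}$, i.e.\ you must go \emph{deep} into the exponential regime, with depth growing linearly in $N-n$.

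The repair is standard and cheap, which is why the rest of your proposal survives it: take $k_n\asymp A(N-n+1)D_n$ for $n\le N$ with $A$ a large constant (say $A\ge 2(a+1)$). Then $\e_{k_n}(B(X_n),Y)\ll n^\al 2^{-A(N-n+1)/2}$, so the level-$n$ contribution is $\ll 2^{-aN}n^{b+\al}2^{-(N-n)}$, and the sum over $n\le N$ is now genuinely dominated by $n=N$, giving $\ll 2^{-aN}N^{b+\al}\asymp k^{-a}(\log k)^{ac+b+\al}$; the extra budget is harmless because $\sum_{n\le N}(N-n+1)D_n\ll D_N\sum_{j\ge0}(j+1)2^{-j}\asymp D_N\asymp k$, using the geometric decay of $D_n$ below $N$. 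With this correction, your critical-level computation, the geometrically decaying budgets for $N<n\le M$ (where $a>\bt$ enters exactly as you say), and the cutoff $M\asymp\frac{a}{a-\bt}\log k$ for the tail all go through. It is worth noting that the paper's Theorem \ref{fcT2} avoids your problematic range entirely, since the class there has no components below $n_0$ and all budgets $k_l\le D_u\ll D_l$ stay in the polynomial regime; the low-level allocation is precisely the one piece of Theorem \ref{fcT1} that your proposal gets wrong.
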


We need a version of Theorem \ref{fcT1}, which we prove below. Let $n_0\in \bbN$. Define the following analog of the class ${\bar \bW}^{a,b}_X$
$$
\bW:={\bar \bW}^{a,b,b'}_{X,n_0}:={\bar \bW}^{a,b,b'}_{X,n_0}\{X_n\} := \Big\{f\in X: f=\sum_{n=n_0+1}^\infty f_n,\quad  f_n\in X_n, 
$$
$$
 \|f_n\|_X \le 2^{-an}n^{b}(n-n_0)^{b'},\quad n=n_0+1,n_0+2,\dots\Big\}.
$$
In our applications we always have $X_1 \subseteq X_2 \cdots \subseteq X_n \subseteq \cdots$ and, therefore, 
\be\label{Dn}
D_n\le D_{n+1},\quad n=1,2,   \dots. 
\ee
 
 \begin{Theorem}\label{fcT2} Let $n_0:=2u+1$, $u\in \bbN$. Assume that $\{D_n\}$ satisfies (\ref{Dn}) and $D_n \asymp 2^{n-u} n^c$, $n\ge 2u$, $c\ge 0$. Suppose that subspaces $\{X_n\}$ satisfy (\ref{EA}) for $n>n_0$. Then for $a>\bt$ there exists a constant $C=C(a,\beta)$    such that
\be\label{fc5}
\e_{Ck}(\bar \bW^{a,b,b'}_{X,n_0}\{X_n\},Y) \ll k^{-2a} (\log k)^{2ac+b+\al},\quad k := D_{2u},
\ee
with a constant in $\ll$ independent of $u$. 
\end{Theorem}
\begin{proof}   For $l\ge n_0+1$ we define $k_l := [D_{2u}2^{\mu(n_0-l)}]$, $\mu:=(a-\bt)/(2\bt)$. Then 
$$
\sum_{l\ge n_0+1} k_l \le Ck,\quad C=C(a,\bt).
$$
It follows from the definition of the class $\bW:=\bar \bW^{a,b,b'}_{X,n_0}$ that  
\be\label{fc5a}
\e_{Ck}( \bW,Y) \le \sum_{l=n_0+1}^\infty 2^{-al}l^{b}(l-n_0)^{b'}\e_{k_l}(B(X_l),Y).
\ee
The above definition of $k_l$ and relation (\ref{Dn}) imply that for $l>n_0$ we have $k_l \le D_l$.     By our assumption (\ref{EA}) we get  
$$
 \e_{k_l}(B(X_l),Y) \ll l^\al \left(\frac{D_l}{k_l+1}\right)^\bt  \left(\log\frac{4D_l}{k_l+1}\right)^\ga 
 $$
 $$
 \le l^\al  \left(\frac{D_l}{D_{2u}}\right)^\bt 2^{\bt\mu(l-n_0)} \left(\log(4D_l/D_{2u}) + \bt\mu(l-n_0) \right)^\ga.
$$
Substituting this bound in (\ref{fc5a}), we obtain
$$
\e_{Ck}( \bW,Y)\ll \sum_{l\ge n_0+1} 2^{-al} l^{b+\al}2^{\mu(l-n_0)\bt} (D_l/D_{2u})^\bt (l-n_0)^\ga (l-n_0)^{b'}
\ll 2^{-an_0}n_0^{b+\al} .
$$
Taking into account that $k\asymp 2^{n_0/2}n_0^c$, we conclude that
\be\label{fc6}
\e_{Ck}(\bW,Y) \ll k^{-2a}(\log k)^{2ac+b+\al}.
\ee

\end{proof}

\section{Main results}
\label{M}

We begin with the following Lemma \ref{ML1}. We need some more notation. Let $Q_n$, $n\in \bbN$, be the stepped hyperbolic cross:
$$
Q_n := \bigcup_{\bs:\|\bs\|_1\le n} \rho(\bs),
$$
where
$$
\rho (\bs) := \{\bk \in \bbZ^d : [2^{s_j-1}] \le |k_j| < 2^{s_j}, \quad j=1,\dots,d\},
$$
and let the corresponding set of the hyperbolic cross polynomials be $\cT(Q_n)$. Also, for a finite subset 
$Q\subset \bbZ^d$ we denote a subspace of the trigonometric polynomials with frequencies in $Q$ by
$$
\cT(Q) := \left\{f: f=\sum_{\bk\in Q} c_\bk e^{i(\bk,\bx)}\right\}.
$$

\begin{Lemma}\label{ML1} Let $d\in \bbN$ and $r>1$  be given. For any $K\in \bH^{r,2d}_{1,\infty}$ and any 
$u \in\bbN$ there exists a subspace $S_{u} \subset L_\infty(\bbT^d)$, $\dim S_{u} \le |Q_{u}|$, such that each $f\in \bW^K_1$ has a representation
\be\label{M1}
f=f^1_{u} + f^2_{u}+f^3_{u}
\ee
with the following properties. We have that $f^1_{u} \in \cT(Q_{u})$, $f^2_{u}\in S_{u}$ and satisfy the bounds $\|f^i_{u}\|_\infty \le C(r,d)$, $i=1,2$. Also, $f^3_{u}$ belongs to the class ${\bar \bW}^{a,b,b'}_{X,n_0}\{X_n\}$ with 
$X=L_1(\bbT^d)$, $n_0=2u+1$, $X_n := \cT(Q_n)$, $a=r$, $b=2d-2$, and $b'=1$.
\end{Lemma}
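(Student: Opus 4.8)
The plan is to expand the kernel $K$ by the de la Vall\'ee Poussin decomposition of Theorem \ref{H} in both the $\bx$- and the $\by$-variables, push it through the integral operator $J_K$, and reassemble the dyadic pieces of $f=J_K\ff$ into the three announced components. Throughout I write $\bs=(\bs',\bs'')\in\bbN_0^d\times\bbN_0^d$, where $\bs'$ indexes the frequency block in $\bx\in\bbT^d$ and $\bs''$ the block in $\by\in\bbT^d$, and set $g_\bs:=J_{A_\bs K}\ff$, so that $f=\sum_{\bs',\bs''}g_{(\bs',\bs'')}$ and each $g_\bs$ is a trigonometric polynomial in $\bx$ with spectrum in $\rho(\bs')$; in particular $A_{\bs'}f=\sum_{\bs''}g_{(\bs',\bs'')}\in\cT(\rho(\bs'))$, and $\sum_{\|\bs'\|_1=n}A_{\bs'}f\in\cT(Q_n)$.

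First I would record the two quantitative facts on which everything rests. From Theorem \ref{H} applied to $K\in\bH^{r,2d}_{1,\infty}$ one has $\|A_\bs K\|\ll 2^{-r(\|\bs'\|_1+\|\bs''\|_1)}$ in the mixed norm with integrability $L_1$ in $\by$ and $L_\infty$ in $\bx$. Since $A_\bs K$ has degree $\le 2^{s''_j}$ in each $\by$-variable, the Nikol'skii inequality (costing a factor $2^{\|\bs''\|_1}$) together with the identity $\|J_{A_\bs K}\|_{L_1\to L_\infty}=\|A_\bs K\|_{L_\infty(\bx)L_\infty(\by)}$ gives
\be
\|g_\bs\|_\infty\ll 2^{-r\|\bs'\|_1-(r-1)\|\bs''\|_1}.
\ee
Summing over $\bs''$, where $r>1$ makes the geometric series converge, yields $\|A_{\bs'}f\|_\infty\ll 2^{-r\|\bs'\|_1}$. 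The second fact is a rank bound: writing $A_\bs K$ in its Fourier expansion shows that $\ff\mapsto g_\bs$ factors through the $\by$-Fourier coefficients with frequencies in $\rho(\bs'')$, so this operator has rank at most $2^{\min(\|\bs'\|_1,\|\bs''\|_1)}$, with range inside the fixed space $\cT(\rho(\bs'))$.

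With these in hand the two outer components are routine. For the low band I set $f^1_u:=\sum_{\|\bs'\|_1\le u}A_{\bs'}f\in\cT(Q_u)$; the bound on $A_{\bs'}f$ and $\#\{\bs':\|\bs'\|_1=n\}\ll n^{d-1}$ give $\|f^1_u\|_\infty\ll\sum_{n\ge0}n^{d-1}2^{-rn}\ll 1$. For the high band I set, for each $n>n_0=2u+1$, $f_n:=\sum_{\|\bs'\|_1=n}A_{\bs'}f\in\cT(Q_n)$, so that $\|f_n\|_{L_1}\le\|f_n\|_\infty\ll n^{d-1}2^{-rn}$, which is dominated by $2^{-rn}n^{2d-2}(n-n_0)$; hence $\sum_{n>n_0}f_n$ lies in the required class ${\bar\bW}^{r,2d-2,1}_{L_1(\bbT^d),n_0}\{\cT(Q_n)\}$ and forms the bulk of $f^3_u$.

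The heart of the matter, and the step I expect to be the main obstacle, is the middle band $F:=\sum_{u<\|\bs'\|_1\le n_0}A_{\bs'}f$. A priori $F$ only lies in $\cT(Q_{n_0})$, whose dimension $\sim|Q_{2u+1}|$ far exceeds the allowed $|Q_u|$, so $S_u$ cannot be frequency-aligned and must be adapted to the operator. The plan is to approximate the middle-band operator $\ff\mapsto F$ by an operator $P$ of rank $\le|Q_u|$, declare $f^2_u:=P\ff$ and $S_u:=\operatorname{range}P$, and feed the residual $F-P\ff$ (whose $\bx$-spectrum is in $Q_{n_0}$, hence lies in every $\cT(Q_n)$, $n>n_0$) into the remaining budget of ${\bar\bW}^{r,2d-2,1}_{L_1(\bbT^d),n_0}$. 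Constructing $P$ means selecting, inside each middle $\bx$-block $\bs'$, a low-dimensional piece of $\sum_{\bs''}g_{(\bs',\bs'')}$ via the rank bound $2^{\min(\|\bs'\|_1,\|\bs''\|_1)}$ and distributing the total rank $|Q_u|$ across the blocks $u<\|\bs'\|_1\le n_0$. The difficulty is to arrange this so that the accumulated residual norm is only $\ll 2^{-2ru}$ times a power of $u$: this is precisely where the smoothness of $K$ in \emph{both} groups of variables must be made to interact, producing the doubled exponent $2r$ (the entropy analogue of the doubling in the Kolmogorov-width bound of Theorem \ref{InT1}). A crude per-block norm-and-rank allocation is lossy at exactly this point, so the real work is the finer allocation that recovers $2^{-2ru}$ while simultaneously keeping $\dim S_u\le|Q_u|$, the extra factors $n^{2d-2}(n-n_0)$ in the target class being there to absorb the combinatorics of how the residual is spread over the levels $n>n_0$.
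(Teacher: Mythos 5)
Your treatment of the two outer regions is sound and coincides with the paper's: the low band $f^1_u=\sum_{\|\bs'\|_1\le u}A_{\bs'}f$ with the Nikol'skii-based bound $\|A_\bs K\|_\infty\ll 2^{-r\|\bs'\|_1-(r-1)\|\bs''\|_1}$ is exactly the paper's $K^1_u$-component, and your grading of the high $\bx$-levels $n>n_0$ fits the target class. But the middle band $u<\|\bs'\|_1\le n_0$ --- which you yourself call the heart of the matter --- is left as an unexecuted plan, so the proof is incomplete precisely where the subspace $S_u$ and the doubled decay must be produced. Worse, the route you sketch (an adaptive allocation of a total rank budget $|Q_u|$ across the middle $\bx$-blocks, using the per-block rank bound $2^{\min(\|\bs'\|_1,\|\bs''\|_1)}$) is not carried out and you give no mechanism that would drive the residual down to $2^{-2ru}$; as you concede, the crude allocation loses exactly the factor $2^{-ru}$ that is at stake. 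In its present form this is a genuine gap, not a routine verification.

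The missing idea is that no adaptivity is needed: split by the $\by$-level symmetrically to the $\bx$-level. Put into $f^2_u$ the image of $K^2_u:=\sum_{\bs^2:\|\bs^2\|_1\le u}\sum_{\bs^1:\|\bs^1\|_1>u}A_{(\bs^1,\bs^2)}(K)$. Since $K^2_u$ has $\by$-spectrum in $Q_u$, the map $\ff\mapsto f^2_u$ factors through the coefficients $\hat\ff(\bk)$, $\bk\in Q_u$, so $\dim S_u\le |Q_u|$ holds automatically --- $S_u$ is simply the range of one fixed band-limited kernel, not a block-by-block selection --- and $\|f^2_u\|_\infty\le\|K^2_u\|_\infty\ll 1$ by the same Nikol'skii estimate and $r>1$. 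What remains then has \emph{both} $\|\bs^1\|_1>u$ and $\|\bs^2\|_1>u$, and the correct grading for $f^3_u$ is by the total level $n:=\|\bs^1\|_1+\|\bs^2\|_1$, which is automatically $>2u+1=n_0$. At level $n$ the number of admissible pairs is $\ll n^{2d-2}(n-n_0)$, whence $\|K_{n,u}\|_{1,\infty}\ll 2^{-rn}n^{2d-2}(n-n_0)$, $\|f_{n,u}\|_1\le\|K_{n,u}\|_{1,\infty}$, and $f_{n,u}\in\cT(Q_n)$ because $\|\bs^1\|_1<n$; this matches $a=r$, $b=2d-2$, $b'=1$ exactly. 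So the exponent $2r$ is not recovered by a clever allocation: it comes for free from the index shift (the decay $2^{-rn}$ begins at $n_0=2u+1$, i.e.\ at magnitude $2^{-2ru}$), and it is Theorem \ref{fcT2}, applied with $k=[D_u]$ and $D_n\asymp 2^{n-u}n^c$, that converts this shift into $k^{-2a}$. In this scheme your troublesome middle band dissolves with no extra work: its $\|\bs''\|_1\le u$ part is swallowed by $f^2_u$, and its doubly-high part enters $f^3_u$ at total levels $n>n_0$.
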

\begin{proof} We write $\bs\in \bbN_0^{2d}$ in the form $\bs = (\bs^1,\bs^2)$, $\bs^i \in \bbN_0^d$, $i=1,2$. 
Then 
$$
K=\sum_\bs A_\bs(K) = \sum_{\bs^1} \sum_{\bs^2} A_{(\bs^1,\bs^2)}(K).
$$
For a given $u \in \bbN$ define the following kernels.
$$
K^1_{u}:=\sum_{\bs^1:\|\bs^1\|_1\le u} \sum_{\bs^2} A_{(\bs^1,\bs^2)}(K),
$$
$$
K^2_{u}:=\sum_{\bs^2:\|\bs^2\|_1\le u} \sum_{\bs^1} A_{(\bs^1,\bs^2)}(K),
$$
$$
K^3_{u}:=\sum_{\bs^1:\|\bs^1\|_1> u} \sum_{\bs^2:\|\bs^2\|_1> u} A_{(\bs^1,\bs^2)}(K).
$$
We now establish some simple properties of these kernels. Theorem \ref{H} guarantees that 
\be\label{M2}
\|A_\bs(K)\|_{1,\infty} \ll 2^{-r\|\bs\|_1}.
\ee
By the Nikol'skii inequality (see, for instance, \cite{VTbookMA}, p.90) relation (\ref{M2}) implies
\be\label{M3}
\|A_\bs(K)\|_{\infty} \ll 2^{-(r-1)\|\bs\|_1}.
\ee
Therefore, we have 
\be\label{M4}
\|K^i_{u}\|_\infty \ll 1,\quad i=1,2.
\ee
Let
$$
f  = (2\pi)^{-d}\int_{\bbT^d} K(\bx,\by)\ff(\by)d\by.
$$
Define
$$
f^i_{u} := (2\pi)^{-d}\int_{\bbT^d} K^i_{u}(\bx,\by)\ff(\by)d\by, \quad i=1,2,3.
$$
Then, the required properties of $f^1_{u}$ and $f^2_{u}$ follow from their definitions and the inequailities 
(\ref{M4}). 

We now proceed to the $f^3_{u}$. We set $n_0:=2u+1$ and for $n>n_0$ define the kernels
$$
K_{n,u} := \sum_{\bs^1:\|\bs^1\|_1> u; \,\bs^2:\|\bs^2\|_1> u;\, \|\bs^1\|_1+\|\bs^2\|_1=n} A_{(\bs^1,\bs^2)}(K).
$$
Then inequality (\ref{M2}) implies
\be\label{M5}
\|K_{n,u}\|_{1,\infty} \ll 2^{-rn}n^{2d-2}(n-2u-1).
\ee
Define
$$
f_{n,u} := (2\pi)^{-d}\int_{\bbT^d} K_{n,u}(\bx,\by)\ff(\by)d\by.
$$
Then
\be\label{M6}
\|f_{n,u}\|_1 \le (2\pi)^{-d}\int_{\bbT^d} \|K_{n,u}(\cdot,\by)\|_1|\ff(\by)|d\by \le \|K_{n,u}\|_{1,\infty}.
\ee
Clearly, $f_{n,u} \in \cT(Q_{n-u})$. Relations (\ref{M5}) and (\ref{M6}) together with the obvious representation
$$
f^3_{u} = \sum_{n=n_0+1}^\infty f_{n,u}
$$
prove that $f^3_{u}\in {\bar \bW}^{a,b,b'}_{X,n_0}\{X_n\}$.

\end{proof}

We now proceed to the proof of the main result of the paper -- Theorem \ref{MT1}. 

 {\bf Proof of Theorem \ref{MT1}.} The proof is based on Lemma \ref{ML1}. For a given $k\in \bbN$ we find $u$ such that 
 $k \asymp 2^uu^d$ and then apply Lemma \ref{ML1} and results on the entropy numbers including Theorem \ref{fcT2}. Later, we explain in more detail how to choose parameter $u$. By Lemma \ref{ML1} we have 
 the representation (\ref{M1}). We begin with the $f^i_u$, $i=1,2$. These functions belong to the balls $B^i$, $i=1,2$, of radius $C(r,d)$ of finite-dimensional subspaces of the Banach space $L_\infty(\bbT^d)$. We use the following well known result (see, for instance, \cite{VTbookMA}, p.324).
 
 \begin{Proposition}\label{MP1} For any $n$-dimensional Banach space $X$ we have
$$
\e^{-n} \le N_\e(B_X,X) \le (1+2/\e)^n,
$$
and, therefore,
$$
\e_k(B_X,X) \le 3(2^{-k/n}).
$$
\end{Proposition}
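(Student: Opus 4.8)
The plan is to prove both covering-number inequalities by a volume-comparison (packing/covering) argument and then deduce the entropy-number bound by choosing $\e$ appropriately. First I would fix a linear isomorphism $T\colon X\to\bbR^n$ and transport the Lebesgue measure along $T$, obtaining a Borel measure $\mathrm{vol}$ on $X$ that is translation invariant and satisfies $\mathrm{vol}(B_X(y,r))=r^n\,\mathrm{vol}(B_X)$ for every $y\in X$ and every $r>0$; here $\mathrm{vol}(B_X)$ is a fixed finite positive number. These dilation and invariance properties are all that the argument uses.

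For the lower bound, suppose $B_X\subseteq\bigcup_{j=1}^{N}B_X(y^j,\e)$ with $N=N_\e(B_X,X)$. Taking volumes and using subadditivity together with the scaling rule gives $\mathrm{vol}(B_X)\le N\e^n\,\mathrm{vol}(B_X)$, hence $N\ge\e^{-n}$, which is the claimed lower bound. For the upper bound, I would take a maximal $\e$-separated subset $\{y^1,\dots,y^N\}$ of $B_X$, that is, a maximal family with $\|y^i-y^j\|>\e$ for $i\ne j$ (finite by the packing estimate below, or by compactness of $B_X$). By maximality it is an $\e$-net of $B_X$, so $N_\e(B_X,X)\le N$. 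Since the centres are more than $\e$ apart, the balls $B_X(y^j,\e/2)$ are pairwise disjoint, and each is contained in $B_X(0,1+\e/2)$. Comparing volumes yields $N(\e/2)^n\le(1+\e/2)^n$, i.e. $N\le(1+2/\e)^n$, as required.

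To obtain $\e_k(B_X,X)\le 3\cdot 2^{-k/n}$ I would split into two cases. If $2^{k/n}\ge 3$, set $\e:=3\cdot 2^{-k/n}$; then $1+2/\e=1+\tfrac{2}{3}2^{k/n}\le 2^{k/n}$ precisely because $2^{k/n}\ge 3$, so the upper bound gives $N_\e(B_X,X)\le(1+2/\e)^n\le 2^k$, and by the definition of the entropy numbers this forces $\e_k(B_X,X)\le\e=3\cdot 2^{-k/n}$. If instead $2^{k/n}<3$, then $3\cdot 2^{-k/n}>1\ge\e_k(B_X,X)$, the last inequality holding since $B_X\subseteq B_X(0,1)$ shows that one ball of radius $1$ already covers $B_X$, whence $\e_k\le 1$ for all $k\ge 0$. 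In both cases the stated bound follows.

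The argument is entirely standard and I do not anticipate a genuine obstacle; the only point requiring care is the justification that a translation-invariant measure on $X$ with the stated dilation behaviour exists and is finite and positive on balls, which is exactly what transporting Lebesgue measure along any linear isomorphism $T$ provides. A secondary bookkeeping point is the case split in the last step, needed because the clean volumetric estimate only controls $\e_k$ once $2^{k/n}$ exceeds $3$; for the remaining small-$k$ range one simply falls back on the trivial bound $\e_k\le 1$.
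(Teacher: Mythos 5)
Your proof is correct, and it is the standard volume-comparison argument: the paper itself states Proposition \ref{MP1} without proof, citing \cite{VTbookMA}, p.~324, where exactly this argument (transported Lebesgue measure for the lower bound, a maximal $\varepsilon$-separated set with disjoint balls of radius $\varepsilon/2$ inside $B_X(0,1+\varepsilon/2)$ for the upper bound) is given. Your deduction of $\e_k(B_X,X)\le 3\cdot 2^{-k/n}$, including the case split with the trivial bound $\e_k\le 1$ when $2^{k/n}<3$, is also sound and matches the standard derivation.
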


In our case $n=|Q_u| \asymp 2^{u}u^{d-1}$. Therefore, by taking (with large enough constants) $k \asymp 2^uu^d \asymp n \log n$ we can make $\e_k(B^i,L_\infty) \ll k^{-c}$ with arbitrarily large $c$. 

We now proceed to the $f^3_u$. By Lemma \ref{ML1} we have that $f^3_{u}$ belongs to the class $\bW:={\bar \bW}^{a,b,b'}_{X,n_0}\{X_n\}$ with 
$X=L_1(\bbT^d)$, $n_0=2u+1$, $X_n := \cT(Q_{n-u})$, $a=r$, $b=2d-2$, and $b'=1$.
We want to apply Theorem \ref{fcT2}. For that we need to check the condition (\ref{EA}). 
Let us discuss the cases $d=1$ and $d=2$ separately.

{\bf The case $d=1$.} The following result holds for $d=1$ (see, for instance, \cite{VTbookMA}, p.345). 
 
\begin{Proposition}\label{MP2} We have for $d=1$
$$
\e_k(\cT( Q_n)_1,L_\infty) \ll  \left\{\begin{array}{ll} (|Q_n|/k) \log (4|Q_n|/k), &\quad k\le 2| Q_n|,\\
2^{-k/(2| Q_n|)},&\quad k\ge 2| Q_n|.\end{array} \right.
$$
Note that in this case $Q_n = (-2^n,2^n)$.
\end{Proposition}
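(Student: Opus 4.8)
The plan is to prove Proposition \ref{MP2} as a special case of the general entropy bound in (\ref{EA}) for the hyperbolic-cross subspaces $\cT(Q_n)$ when $d=1$. Since $d=1$, the stepped hyperbolic cross degenerates: the condition $\|\bs\|_1 \le n$ with $\bs$ scalar just means $s_1 \le n$, so $Q_n = \cup_{s\le n}\rho(s) = (-2^n,2^n)\cap\bbZ$, whence $|Q_n| \asymp 2^n$ and $\cT(Q_n)$ is simply the space of univariate trigonometric polynomials of degree $< 2^n$. Thus the statement is really the classical estimate for the entropy numbers of the unit ball of $\cT(Q_n)$ equipped with the $L_1$-norm, measured in $L_\infty$. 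My first step is therefore to reduce the problem to a finite-dimensional one of dimension $D_n := \dim\cT(Q_n) \asymp |Q_n| \asymp 2^n$, and to recognize that the two regimes ($k\le 2|Q_n|$ and $k\ge 2|Q_n|$) correspond exactly to the two standard regimes in finite-dimensional entropy estimates.

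For the large-$k$ regime $k\ge 2|Q_n|$, the bound $\e_k \ll 2^{-k/(2|Q_n|)}$ follows immediately from the volumetric estimate in Proposition \ref{MP1} applied to the $D_n$-dimensional space, since $D_n \asymp |Q_n|$ and $2^{-k/n}$ with $n=D_n$ gives precisely the claimed decay up to constants. The genuine content is the small-$k$ regime $k \le 2|Q_n|$, where one must produce the extra logarithmic factor $\log(4|Q_n|/k)$ beyond the naive volumetric rate. My approach here would be to pass through the $L_2$-norm as an intermediary: on $\cT(Q_n)$ the $L_1$ and $L_2$ norms, and $L_2$ and $L_\infty$ norms, are comparable up to Nikol'skii-type factors (a factor $|Q_n|^{1/2}$ relating $L_2$ to $L_\infty$). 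The standard route is a discretization/sampling argument — identifying $f\in\cT(Q_n)$ with its vector of Fourier coefficients, so that the $L_1$-unit ball maps into a convex body and the $L_\infty$-norm becomes a sup over a controlled family of linear functionals — and then invoking the classical finite-dimensional entropy estimates for the identity map between $\ell_p^N$ balls (the Schütt / Carl–Pajor / Höllig type results), which yield exactly the factor $(|Q_n|/k)\log(4|Q_n|/k)$ in this range.

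The main obstacle I expect is producing the logarithmic factor with the correct argument $\log(4|Q_n|/k)$ rather than merely $\log(4|Q_n|)$ or $\log|Q_n|$; the sharp dependence on the ratio $|Q_n|/k$ is what makes (\ref{EA}) usable with $\ga=1$ and propagates (through Theorem \ref{fcT2}) into the precise power of $\log$ in Theorem \ref{MT1}. Getting this sharp form requires the refined finite-dimensional estimates for entropy numbers of embeddings $\ell_1^N \hookrightarrow \ell_\infty^N$ (equivalently the dual of Maurey's lemma / empirical-approximation arguments), not just a crude volume count. Since Proposition \ref{MP2} is quoted as a known result (with reference to \cite{VTbookMA}, p.345), in practice I would cite that source for the sharp small-$k$ bound rather than reprove the delicate logarithmic term, and limit my own argument to verifying the dimension count $|Q_n|\asymp 2^n$ and the reduction of the two regimes to Proposition \ref{MP1} and the cited finite-dimensional estimate.
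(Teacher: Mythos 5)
Your proposal is correct and takes essentially the same approach as the paper: the paper gives no proof of Proposition \ref{MP2} at all, quoting it as a known result from \cite{VTbookMA}, p.~345, and your reduction (the count $|Q_n|\asymp 2^n$, the identification of $\cT(Q_n)$ with univariate polynomials of degree $<2^n$, and deferring the sharp small-$k$ bound with its $\log(4|Q_n|/k)$ factor to the cited finite-dimensional estimates) is exactly in that spirit. One small caution: Proposition \ref{MP1} by itself does not give the large-$k$ regime, since the set being covered is the $L_1$-ball while the covering is in $L_\infty$; one must compose the volumetric bound with a Nikol'skii-type step $\|f\|_\infty \ll |Q_n|\,\|f\|_1$ (the deliberate slack in the exponent, $2^{-k/(2|Q_n|)}$ instead of $2^{-k/|Q_n|}$, absorbs the resulting factor), a standard point that the cited source handles.
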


Proposition \ref{MP2} implies that condition (\ref{EA}) is satisfied with $\alpha =0$, $\bt =1$, and $\gamma =1$.
In this case $c=0$ and $D_n\asymp 2^{n-u}$. We now apply Theorem \ref{fcT2} with $k' := |Q_u| \asymp k /\log k$, $a=r$, $b=0$, $b'=1$. We obtain
$$
\e_k(\bW,L_\infty) \le \e_{k'}(\bW,L_\infty) \ll (k')^{-2r} \ll k^{-2r} (\log k)^{2r}.
$$

{\bf The case $d=2$.} The following result holds for $d=2$ (see, for instance, \cite{VTbookMA}, p.363). 
 
\begin{Proposition}\label{MP3} We have for $d=2$
$$
\e_k(\cT( Q_n)_1,L_\infty) \ll  \left\{\begin{array}{ll} n^{1/2}(| Q_n|/k) \log (4| Q_n|/k), &\quad k\le 2| Q_n|,\\
n^{1/2}2^{-k/(2| Q_n|)},&\quad k\ge 2| Q_n|.\end{array} \right.
$$
\end{Proposition}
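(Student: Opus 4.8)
The plan is to follow the template of the one-dimensional Proposition \ref{MP2}, whose right-hand side has exactly the same shape, and to locate the extra factor $n^{1/2}$ in the interaction between the dyadic blocks that is peculiar to $d=2$. I would first reduce the tail regime $k\ge 2|Q_n|$ to the critical case. Since $\cT(Q_n)$ is a space of dimension $|Q_n|$, the submultiplicativity of entropy numbers combined with the purely volumetric bound of Proposition \ref{MP1}, applied inside this $|Q_n|$-dimensional space, turns any covering at the critical scale into the geometric decay $n^{1/2}2^{-k/(2|Q_n|)}$. Hence it suffices to treat $k\le 2|Q_n|$, and in particular to establish the critical estimate $\e_{c|Q_n|}(\cT(Q_n)_1,L_\infty)\ll n^{1/2}$.

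For the range $k\le 2|Q_n|$ I would exploit the block structure $Q_n=\cup_{\|\bs\|_1\le n}\rho(\bs)$ and write $f=\sum_{\|\bs\|_1\le n}A_\bs(f)$. On a single block $\rho(\bs)$, of dimension $M\asymp 2^{\|\bs\|_1}$, the $L_1\to L_\infty$ covering problem can be transferred to the finite-dimensional identity $\ell_1^M\to\ell_\infty^M$: a discretization of both norms by the values on a grid of $\asymp M$ points (legitimate on a single dyadic box, with the endpoint equivalences supplied by the Nikol'skii inequality, see \cite{VTbookMA}, p.90) reduces the continuous problem to the discrete one, and Sch\"utt's sharp entropy estimates for $\ell_1^M\to\ell_\infty^M$ then, after allocating the budget $k$ over the blocks (in the spirit of the geometric allocation $k_l$ used in the proof of Theorem \ref{fcT2}), reproduce the shared factor $(|Q_n|/k)\log(4|Q_n|/k)$. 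This is the step that recovers the part of the bound common with the case $d=1$.

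The genuinely hard step, and the one I expect to be the main obstacle, is the combination of the $\asymp n$ blocks making up the top levels $\|\bs\|_1\asymp n$ of the cross. Estimating $\|f\|_\infty$ by the triangle inequality --- covering each block separately and summing the radii --- costs a factor equal to the number of blocks, namely $n$, and overshoots the target by $n^{1/2}$. Recovering the correct power requires bounding the $L_\infty$ norm of the sum not by the sum of the norms but by a subgaussian (Chang--Wilson--Wolff, or Talagrand generic-chaining) inequality that uses the near-independence of the dyadic blocks in two variables, thereby replacing the factor $n$ by $n^{1/2}=n^{(d-1)/2}$. This estimate is precisely the upper-bound counterpart of the Small Ball Inequality, which is why the argument closes for $d=2$ (cf. \cite{Tal}) and why the corresponding improvement for $d>2$ is still unavailable. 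Once this subgaussian combination is in hand, the two regimes and the reduction to finite-dimensional entropy are comparatively routine.
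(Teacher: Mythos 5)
The paper does not prove Proposition \ref{MP3} at all: it is quoted as a known result from \cite{VTbookMA}, p.~363, where it goes back to the circle of results of \cite{TE3} and of Kashin--Temlyakov \cite{KaTe03}. So the comparison is with the book's proof, and against that proof your proposal has a genuine gap at exactly the step you yourself flag as the hard one. The routine parts are fine and standard: the tail regime $k\ge 2|Q_n|$ does follow from the critical-scale bound via Proposition \ref{MP1} inside the $\asymp|Q_n|$-dimensional space $\cT(Q_n)$, and the per-block reduction (Marcinkiewicz--Zygmund discretization on a product grid plus Sch\"utt's estimates for $\ell_1^M\to\ell_\infty^M$, with the $L_1$-normalization contributing the factor $M$) does reproduce the shape $(|Q_n|/k)\log(4|Q_n|/k)$. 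But the proposed mechanism for recovering $n^{1/2}$ --- a ``subgaussian combination'' of the block coverings via Chang--Wilson--Wolff or generic chaining --- cannot work. There is no deterministic inequality of this kind: take $t_\bs(\bx):=e^{i(\bk_\bs,\bx)}\cK_{m_1}(x_1)\cK_{m_2}(x_2)/(m_1m_2)$ with $m_j\asymp 2^{s_j}$ and $\bk_\bs$ chosen so that the spectrum of $t_\bs$ lies in $\rho(\bs)$; then $\|t_\bs\|_\infty=t_\bs(\mathbf 0)=1$ for every $\bs$ with $\|\bs\|_1=n$, and $\bigl\|\sum_\bs t_\bs\bigr\|_\infty\ge n+1$, so any pointwise bound improving on the triangle inequality by $n^{1/2}$ is false, while the $\ell_2$-form $\|f\|_\infty\le\sqrt{n+1}\,\bigl(\sum_\bs\|\delta_\bs(f)\|_\infty^2\bigr)^{1/2}$ is just Cauchy--Schwarz and yields exactly the triangle-inequality cost. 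CWW controls exponential-square integrability by the square function and cannot bound an $L_\infty$ norm; chaining gives $\sqrt{n}$ gains for \emph{random} signs, but a covering argument must serve all sign patterns, including the aligned one above. More structurally: once you cover each block separately, the radii of the resulting product nets add, and optimizing the allocation $k_\bs$ (as in Theorem \ref{fcT2}) gives at best $n(|Q_n|/k)\log(4|Q_n|/k)$; no combination lemma downstream of per-block coverings can repair this.

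The missing idea in the actual proof is that the nets are \emph{not} products over blocks: the factor $n^{1/2}$ comes from sharp volume estimates for the unit balls of hyperbolic cross polynomials in $d=2$ (Kashin--Temlyakov \cite{KaTe03}; see \cite{VTbookMA}, Ch.~7), fed into general volume-to-entropy lemmas after discretization. Those volume estimates are proved with Riesz products for hyperbolic crosses, a construction available only for $d=2$ --- this, rather than a subgaussian combination principle, is the true reason the argument closes at $d=2$ and is open for $d>2$. Your heuristic linking the exponent $n^{(d-1)/2}$ to the Small Ball problem is sound, but you have the direction reversed: the Small Ball Inequality (and the CWW/Talagrand machinery, cf.\ \cite{Tal}) underlies the entropy \emph{lower} bounds, whereas the upper bound asserted in Proposition \ref{MP3} rests on the volume estimates. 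So you located the difficulty correctly, but the central step of your proof, as proposed, would fail.
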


Proposition \ref{MP3} implies that condition (\ref{EA}) is satisfied with $\alpha =1/2$, $\bt =1$, and $\gamma =1$.
In this case $c=1$ and $D_n \asymp 2^{n-u}n$. We now apply Theorem \ref{fcT2} with $k' := |Q_u| \asymp k /\log k$, $a=r$, $b=2$, $b'=1$. We obtain
$$
\e_k(\bW,L_\infty) \le \e_{k'}(\bW,L_\infty) \ll (k')^{-2r}(\log (k'))^{2r+5/2} \ll k^{-2r} (\log k)^{4r+5/2}.
$$

This completes the proof of Theorem \ref{MT1}.

Let us now discuss the case of $Y=L_p(\bbT^2)$ with $p<\infty$. The following result holds for $d=2$ (see, for instance, \cite{VTbookMA}, p.361).

\begin{Proposition}\label{MP4} Let $ 1< p <\infty$ and $\bt:=1-1/p$. Then for $d=2$
$$
\e_k(\cT(Q_n)_1,L_p) \ll  \left\{\begin{array}{ll}  (|Q_n|/k)^\bt (\log (4|Q_n|/k))^\bt, &\quad k\le 2|Q_n|,\\
 2^{-k/(2|Q_n|)},&\quad k\ge 2|Q_n|.\end{array} \right.
$$
\end{Proposition}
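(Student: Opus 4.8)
The plan is to estimate $\e_k(\cT(Q_n)_1,L_p)$ separately in the two ranges that appear in the statement, writing $N:=|Q_n|\asymp 2^n n$ for the dimension of $\cT(Q_n)$. I would establish the main range $k\le 2N$ first and then deduce the tail $k\ge 2N$ from it: the main-range estimate, evaluated at $k\asymp N$, furnishes a covering of $\cT(Q_n)_1$ at scale $\asymp 1$ by $2^{O(N)}$ balls, and since each piece lies in the $N$-dimensional space $\cT(Q_n)$, I refine inside every ball by Proposition \ref{MP1} to gain the pure exponential decay $2^{-k/(2N)}$. This reduction is routine, so the whole matter rests on the range $k\le 2N$.

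For the main range my approach is a dyadic-block decomposition. For $f\in\cT(Q_n)$ I write $f=\sum_{\|\bs\|_1\le n}A_\bs(f)$ (up to a harmless enlargement of the index set), using that the operators $A_\bs$ are convolutions with kernels of uniformly bounded $L_1$ norm, so that $\|A_\bs(f)\|_1\ll\|f\|_1\le 1$ for every $\bs$. Thus each block $A_\bs(f)$ lies in a fixed multiple of the $L_1$ unit ball of $\cT(\rho(\bs))$, a space of dimension $N_\bs\asymp 2^{\|\bs\|_1}$. On a single block Nikol'skii's inequality gives $\|g\|_p\ll N_\bs^{\bt}\|g\|_1$, so combining this scaling with Proposition \ref{MP1} yields a per-block $L_p$ entropy profile that is $\ll N_\bs^{\bt}$ for few bits and decays like $N_\bs^{\bt}2^{-j/N_\bs}$ once $j\gtrsim N_\bs$.

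It then remains to recombine the blocks and to allocate the entropy budget. I would distribute bits $k_\bs$ among the blocks with $\sum_\bs k_\bs\le k$, build an $\e$-net for each block in $L_p$, and --- this is the delicate point --- combine the resulting block errors not through the triangle inequality but through the Littlewood--Paley square function, i.e.\ via the equivalence $\|\sum_\bs h_\bs\|_p\asymp\|(\sum_\bs|h_\bs|^2)^{1/2}\|_p$ valid for $1<p<\infty$. This calls for a joint, vector-valued net that controls the square function of the errors rather than their individual norms, and it is exactly this $\ell_2$-type recombination that prevents the accumulation of logarithmic factors over the $\asymp n$ dyadic levels. Optimising the allocation $\{k_\bs\}$ --- a water-filling that balances each level's contribution $N_\bs^{\bt}2^{-k_\bs/N_\bs}$ against the number $\asymp m$ of blocks at level $\|\bs\|_1=m$ --- then produces the factor $(N/k)^{\bt}$, while the logarithm $\log(4N/k)$ emerges from summing over those levels.

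The step I expect to be the main obstacle is precisely this square-function recombination together with the budget optimisation: one must net the individual blocks and simultaneously keep control of $(\sum_\bs|h_\bs|^2)^{1/2}$ in $L_p$, so that the per-block errors add with an $\ell_2$ (rather than $\ell_1$) loss. This is where the hypothesis $1<p<\infty$ is indispensable: the finite Nikol'skii exponent $\bt=1-1/p<1$ and the availability of the Littlewood--Paley characterisation of $L_p$ are what keep the exponent at $\bt$ and suppress the extra $\log$-power; at the endpoint $p=\infty$ the square function is unavailable and one pays the additional $n^{1/2}$ factor seen in Proposition \ref{MP3}.
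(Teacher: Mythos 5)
The paper does not prove Proposition \ref{MP4} at all: it is quoted as a known result from \cite{VTbookMA}, p.~361, where the proof rests on finite-dimensional entropy/volume machinery (Sch\"utt-type estimates for $\e_j(\mathrm{id}:\ell_1^N\to\ell_p^N)$ and volume estimates for sets of trigonometric polynomials), not on the ingredients you use. Measured against that, your sketch has a genuine gap, and it sits exactly where you placed your two key steps. First, your per-block entropy profile is too weak. Nikol'skii plus Proposition \ref{MP1} gives $\e_j(\cT(\rho(\bs))_1,L_p)\ll N_\bs^{\bt}2^{-j/(2N_\bs)}$ with $N_\bs\asymp 2^{\|\bs\|_1}$, which is \emph{flat} for $j\lesssim N_\bs$: there is no decay at all until the bit budget reaches the block dimension. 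Unlike a smoothness class, $\cT(Q_n)_1$ carries no decay in $\bs$ --- for every block with $\rho(\bs)\subset Q_n$ the class contains functions (normalized de la Vall\'ee Poussin kernels of the block) with $\|f\|_1\asymp 1$ and $\|f\|_p\asymp N_\bs^{\bt}$ --- so the top levels dominate. Test $k\asymp 2^n\asymp |Q_n|/n$: the target is $\ll (n\log 4n)^{\bt}$, but the $\asymp n$ blocks at level $\|\bs\|_1=n$ have total dimension $\asymp 2^n n\gg k$, so under any allocation most of them receive $k_\bs\ll N_\bs$ bits and your profile certifies only the trivial error $\asymp 2^{n\bt}$ on each --- exponentially larger than the target. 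No water-filling repairs this; what is needed is decay in the sublinear regime $j\ll N_\bs$, i.e.\ precisely Sch\"utt's bound $\e_j(\mathrm{id}:\ell_1^N\to\ell_p^N)\asymp \bigl(\log(N/j+1)/j\bigr)^{\bt}$, which after the natural $N^{\bt}$ rescaling already has the exact shape $\bigl((N/k)\log(4N/k)\bigr)^{\bt}$ of the proposition. In particular the logarithm does not ``emerge from summing over levels'' as you suggest; it is present for a single ball.

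Second, the recombination step you flag as ``delicate'' is in fact a missing idea rather than a technical refinement. With independent nets per block, the Littlewood--Paley route costs $(\#\{\bs:\|\bs\|_1=m\})^{1/2}\asymp m^{1/2}$ per level even in the favorable case $p\ge 2$ (where $\|(\sum_\bs|h_\bs|^2)^{1/2}\|_p\le(\sum_\bs\|h_\bs\|_p^2)^{1/2}$ does hold); that is the $n^{1/2}$ of Proposition \ref{MP3} resurfacing, and it exceeds the permitted $(\log)^{\bt}$ for every $p<\infty$. For $1<p<2$ matters are worse: the $\ell_2$-aggregation inequality fails (disjointly supported $h_\bs$ give $\|(\sum_\bs|h_\bs|^2)^{1/2}\|_p=(\sum_\bs\|h_\bs\|_p^p)^{1/p}$, which can exceed the $\ell_2$ aggregate), so only $\ell_p$-aggregation is available. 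The ``joint, vector-valued net controlling the square function'' that you invoke to evade both losses is exactly what you would have to construct, and the known proof avoids the issue entirely by working with coefficient-domain volume and entropy estimates instead of block-by-block netting. Your reduction of the range $k\ge 2|Q_n|$ to the main range via a scale-one covering plus Proposition \ref{MP1} is fine and standard, but it inherits the gap in the main range.
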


Proposition \ref{MP4} implies that condition (\ref{EA}) is satisfied with $\alpha =0$, $\bt =1-1/p$, and $\gamma =1-1/p$. In this case $c=1$ and $D_n \asymp 2^{n-u}n$. We now apply Theorem \ref{fcT2} with $k' := |Q_u| \asymp k /\log k$, $a=r$, $b=2$, $b'=1$. We obtain
$$
\e_k(\bW,L_p) \le \e_{k'}(\bW,L_p) \ll (k')^{-2r}(\log (k'))^{2r+2} \ll k^{-2r} (\log k)^{4r+2}.
$$
We formulate it as a theorem.

 \begin{Theorem}\label{MT2} Let  $r>1$  be given. Then, we have for $p<\infty$
  $$
 \sup_{K\in \bH^{r,4}_{1,\infty}} \e_k(\bW^K_1,L_p) \ll k^{-2r} (\log 2k)^{4r+2}.
 $$
 \end{Theorem}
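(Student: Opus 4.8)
The plan is to follow the $d=2$ argument of Theorem~\ref{MT1}, but with every norm taken in $L_p$ rather than $L_\infty$; the only genuinely new input is the $L_p$-entropy of two-dimensional hyperbolic cross polynomials recorded in Proposition~\ref{MP4}. Fix $K\in\bH^{r,4}_{1,\infty}$ and $f\in\bW^K_1$. Given $k$, I would first choose $u\in\bbN$ with $k\asymp 2^uu^2$ (so $u\asymp\log k$) and apply Lemma~\ref{ML1} with $d=2$ to write $f=f^1_u+f^2_u+f^3_u$, where $f^1_u\in\cT(Q_u)$ and $f^2_u\in S_u$ satisfy $\|f^i_u\|_\infty\le C(r,2)$, and $f^3_u$ belongs to $\bW:={\bar\bW}^{a,b,b'}_{X,n_0}\{X_n\}$ with $X=L_1(\bbT^2)$, $n_0=2u+1$, $X_n=\cT(Q_n)$, $a=r$, $b=2$, $b'=1$. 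This splitting is uniform over $K\in\bH^{r,4}_{1,\infty}$, so it suffices to bound the $L_p$-entropy of the three pieces and take $\sup_K$ at the end.

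For the two finite-dimensional pieces $f^1_u,f^2_u$ I would use $\|\cdot\|_p\le\|\cdot\|_\infty$ on $\bbT^2$: they lie in balls of radius $C(r,2)$ of subspaces of $L_p$ of dimension $\asymp|Q_u|\asymp 2^uu$. Proposition~\ref{MP1}, applied in $L_p$ with the budget $k\asymp|Q_u|\log|Q_u|$, then bounds their entropy numbers by $k^{-c}$ with $c$ as large as we please, so these terms are negligible against the target $k^{-2r}(\log 2k)^{4r}$.

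The substantial part is $f^3_u\in\bW$, to which I would apply Theorem~\ref{fcT2} with $Y=L_p(\bbT^2)$. Two points require care. First, although Lemma~\ref{ML1} only records the $n$-th block of $f^3_u$ as an element of $\cT(Q_n)$, that block is assembled solely from the terms $A_{(\bs^1,\bs^2)}(K)$ with $\|\bs^1\|_1,\|\bs^2\|_1>u$, so its $\bx$-frequencies actually lie in $Q_{n-u}$; the effective dimension is therefore $D_n\asymp|Q_{n-u}|\asymp 2^{n-u}n$, which gives $c=1$ and is exactly what produces the doubled exponent $-2r$. Second, the hypothesis~(\ref{EA}) for the $L_1$-unit balls of these blocks measured in $L_p$ is precisely Proposition~\ref{MP4}, which for $d=2$ supplies~(\ref{EA}) with $\al=0$, $\bt=1-1/p$ and $\ga=1-1/p$. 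Since $r>1>\bt$, the requirement $a>\bt$ holds and Theorem~\ref{fcT2} applies with $k':=|Q_u|\asymp k/\log k$; passing back from $k'$ to $k$ (the factor $(k')^{-2r}\asymp k^{-2r}(\log k)^{2r}$ absorbing one power of the logarithm) and using monotonicity of $\e_m$ in $m$ yields $\e_k(\bW,L_p)\le\e_{Ck'}(\bW,L_p)\ll k^{-2r}(\log 2k)^{4r}$. Summing the three contributions and taking $\sup_K$ finishes the proof.

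Within this scheme the result doing all the work is Proposition~\ref{MP4}: the sharp $L_1\to L_p$ entropy of two-dimensional hyperbolic cross polynomials is the genuinely hard mixed-smoothness estimate, and it is what fixes $\bt=1-1/p$ and hence the final powers of $k$ and $\log k$. Taking it as given, the main obstacle I anticipate is twofold and bookkeeping in nature: recognizing that the effective block dimension is $2^{n-u}n$ rather than $2^nn$ (this is the step that turns the exponent $-r$ of Theorem~\ref{fcT1} into the $-2r$ of Theorem~\ref{fcT2}), and then tracking how the logarithmic powers produced by Theorem~\ref{fcT2} combine with the conversion factor from $k'\asymp k/\log k$ to collapse into the single exponent $4r$.
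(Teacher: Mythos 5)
Your argument is essentially identical to the paper's proof: the paper likewise reuses the decomposition of Lemma~\ref{ML1} (disposing of $f^1_u,f^2_u$ via Proposition~\ref{MP1}) and treats $f^3_u$ by verifying condition~(\ref{EA}) through Proposition~\ref{MP4} with $\al=0$, $\bt=\ga=1-1/p$, $c=1$, $D_n\asymp 2^{n-u}n$, then applies Theorem~\ref{fcT2} with $k'=|Q_u|\asymp k/\log k$ to obtain $(k')^{-2r}(\log k')^{2r}\ll k^{-2r}(\log 2k)^{4r}$. Your explicit justification of the effective block dimension $|Q_{n-u}|\asymp 2^{n-u}n$ (the $\bx$-frequencies of the $n$-th block satisfy $\|\bs^1\|_1<n-u$) is precisely the step the paper asserts without comment, so the two proofs coincide.
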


 \section{Some lower bounds}
 \label{L}
 
 Consider the function
 $$
 K(\bx,\by) := F_r(\bx-\by), \quad \bx,\by \in \bbT^d, \quad d\in\bbN,
 $$
 where $F_r$ is the multivariate Bernoulli kernel defined in (\ref{Bi8m}) with $\ba =(r,\dots,r)$ and $v=d$. 
 
 \begin{Lemma}\label{LL1} We have $F_{2r}(\bx-\by) \in \bH^{r,2d}_{1,\infty}C(r,d)$.
  \end{Lemma}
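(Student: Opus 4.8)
The plan is to verify the dyadic-block characterization of Lemma~\ref{H} rather than work with mixed differences directly. Write $\bq=(\underbrace{1,\dots,1}_{d},\underbrace{\infty,\dots,\infty}_{d})$ for the exponent vector of $\bH^{r,2d}_{1,\infty}$, so that $\|\cdot\|_{1,\infty}=\sup_\by\|\,\cdot\,(\cdot,\by)\|_{L_1(d\bx)}$, and recall $(\ba,\bs)=r\|\bs\|_1$ when $\ba=(r,\dots,r)$. By the converse part of Theorem~\ref{H} it then suffices to produce a constant $C(r,d)$ with
$$
\|A_{(\bs^1,\bs^2)}(K)\|_{1,\infty}\ll 2^{-r(\|\bs^1\|_1+\|\bs^2\|_1)},\qquad K(\bx,\by):=F_{2r}(\bx-\by),
$$
for all $\bs=(\bs^1,\bs^2)\in\bbN_0^{2d}$. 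First I would compute the Fourier coefficients of $K$ on $\bbT^{2d}$: substituting $\bz=\bx-\by$ gives $\hat K(\bm,\bn)=\hat F_{2r}(\bm)$ when $\bn=-\bm$ and $0$ otherwise, so $K$ is supported on the anti-diagonal, with $|\hat F_{2r}(\bm)|=\prod_{j:\,m_j\ne0}|m_j|^{-2r}$.

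Next I would exploit this diagonal structure. Since $\widehat{A_{(\bs^1,\bs^2)}(K)}(\bm,\bn)=[\bn=-\bm]\,\hat F_{2r}(\bm)\prod_j\hat\cA_{s^1_j}(m_j)\hat\cA_{s^2_j}(m_j)$ (using that $\hat\cA_s$ is even), the block is a function only of $\bx-\by$:
$$
A_{(\bs^1,\bs^2)}(K)(\bx,\by)=g_{\bs^1,\bs^2}(\bx-\by),\qquad \hat g_{\bs^1,\bs^2}(\bm)=\hat F_{2r}(\bm)\prod_{j=1}^d\hat\cA_{s^1_j}(m_j)\hat\cA_{s^2_j}(m_j).
$$
Two consequences follow. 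Because $\supp\hat\cA_s$ is the annulus $\{2^{s-2}\le|k|\le 2^{s}\}$, the factor $\hat\cA_{s^1_j}(m_j)\hat\cA_{s^2_j}(m_j)$ vanishes unless $|s^1_j-s^2_j|\le 2$; hence $g_{\bs^1,\bs^2}\equiv0$ unless $|\,\|\bs^1\|_1-\|\bs^2\|_1|\le 2d$. Moreover the vector norm collapses: by translation invariance of the $L_1$ norm, $\|g_{\bs^1,\bs^2}(\cdot-\by)\|_{L_1(d\bx)}=\|g_{\bs^1,\bs^2}\|_{L_1}$ independently of $\by$, so $\|A_{(\bs^1,\bs^2)}(K)\|_{1,\infty}=\|g_{\bs^1,\bs^2}\|_{L_1(\bbT^d)}$. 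This is the step where depending only on $\bx-\by$ pays off, turning the $L_\infty$-in-$\by$ outer norm into a triviality.

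It remains to bound $\|g_{\bs^1,\bs^2}\|_{L_1}$. I would factor $g_{\bs^1,\bs^2}=\cA_{\bs^2}\ast A_{\bs^1}(F_{2r})$ (the Fourier symbols match), and apply Young's inequality together with $\|\cA_{\bs^2}\|_{L_1(\bbT^d)}=\prod_j\|\cA_{s^2_j}\|_{L_1(\bbT)}\le 6^d$ to get $\|g_{\bs^1,\bs^2}\|_{L_1}\ll\|A_{\bs^1}(F_{2r})\|_{L_1}$. The key input is the dyadic-block estimate for the Bernoulli kernel, $\|A_{\bs^1}(F_{2r})\|_{L_1}\ll 2^{-2r\|\bs^1\|_1}$, equivalently $F_{2r}\in\bH^{2r,d}_1$, which is a standard property (see \cite{VTmon},\cite{VTbookMA}). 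Finally I would invoke the near-diagonal constraint $\|\bs^2\|_1\le\|\bs^1\|_1+2d$: from $2\|\bs^1\|_1\ge\|\bs^1\|_1+\|\bs^2\|_1-2d$ one gets $2^{-2r\|\bs^1\|_1}\le 2^{2rd}\,2^{-r(\|\bs^1\|_1+\|\bs^2\|_1)}$, which yields the required bound and completes the proof via Theorem~\ref{H}. The only genuinely nontrivial ingredient is the $L_1$ Bernoulli-block estimate $\|A_{\bs^1}(F_{2r})\|_{L_1}\ll 2^{-2r\|\bs^1\|_1}$; unlike the crude Littlewood--Paley $L_2$ bound (which only gives $2^{(1/2-2r)\|\bs^1\|_1}$ per factor), it requires the localization/multiplier argument reflecting that the block is essentially $2^{-2r\|\bs^1\|_1}$ times a profile of bounded $L_1$ norm. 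Everything else is Fourier bookkeeping and the diagonal reduction.
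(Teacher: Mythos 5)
Your proof is correct and takes essentially the same route as the paper's: both verify the block condition of Theorem \ref{H} by reducing $\|A_{(\bs^1,\bs^2)}(K)\|_{1,\infty}$ --- via the $L_1$-bounded convolution with $\cA_{\bs^2}$ and the translation invariance of the $L_1$ norm in $\bx$ --- to the known Bernoulli-block estimate $\|A_{\bs^1}(F_{2r})\|_1 \ll 2^{-2r\|\bs^1\|_1}$, which is also the paper's key input. The only (cosmetic) difference is in how the exponent halving $2^{-2r\|\bs^1\|_1} \ll 2^{-r\|\bs\|_1}$ is justified: the paper assumes without loss of generality that $\|\bs^1\|_1 \ge \|\bs^2\|_1$ and treats the other case symmetrically, whereas you use the equally valid observation that the diagonal Fourier support of $K(\bx,\by)=F_{2r}(\bx-\by)$ forces $A_{(\bs^1,\bs^2)}(K)$ to vanish unless $|s^1_j - s^2_j| \le 2$ for every $j$.
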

  \begin{proof} Let $\bs = (\bs^1,\bs^2)$ with $\bs^i \in \bbN_0^d$, $i=1,2$. Assume that $\|\bs^1\|_1 \ge \|\bs^2\|_1$.
  The other case $\|\bs^1\|_1 \le \|\bs^2\|_1$ is analyzed in the same way. It is easy to check that
  $$
  \|A_\bs(F_{2r}(\bx-\by))\|_{1,\infty} \le C(d)\|A_{\bs^1}(F_{2r})(\bx-\by)\|_{1,\infty} = C(d)\|A_{\bs^1}(F_{2r})\|_1.
  $$
  Using the well known results on the Bernoulli kernel $F_{2r}$ (see, for instance, \cite{VTbookMA}, p.164), we continue 
  $$
  \le C(r,d)2^{-2r\|\bs^1\|_1} \le C(r,d)2^{-r\|\bs\|_1}.
  $$
  This completes the proof. 
  \end{proof}
  
  The following result is known (see, for instance, \cite{VTbookMA}, p.363, Theorem 7.7.2). 
  
 \begin{Theorem}\label{Wlo} For $r>0$ and $1\le q \le\infty$ one has
$$
\e_k(\bW^r_q,L_1) \gg k^{-r}(\log k)^{r (d-1)}.
$$
\end{Theorem}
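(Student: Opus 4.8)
The plan is to prove the lower bound by the volume (packing) method, after two reductions. First, on $\bbT^d$ with normalized Lebesgue measure one has $\|\ff\|_1\le\|\ff\|_q\le\|\ff\|_\infty$, so the unit ball of $L_\infty$ lies inside the unit ball of $L_q$, and hence $\bW^r_\infty\subseteq\bW^r_q$ for every $q$. Since entropy numbers are monotone under inclusion, it suffices to prove the estimate for $q=\infty$, i.e. to show $\e_k(\bW^r_\infty,L_1)\gg k^{-r}(\log k)^{r(d-1)}$.

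Second, I would localize to a single dyadic hyperbolic layer. Fix $n$ and let $\Delta Q_n:=Q_n\setminus Q_{n-1}=\cup_{\|\bs\|_1=n}\rho(\bs)$, a frequency set of cardinality $N:=|\Delta Q_n|\asymp 2^n n^{d-1}$. On this layer the Bernoulli multiplier is essentially constant: from \eqref{Bi8}--\eqref{Bi8m}, for $\bk\in\rho(\bs)$ with $\|\bs\|_1=n$ one has $|\widehat{F_r}(\bk)|=\prod_j|k_j|^{-r}\asymp 2^{-rn}$ (restricting to the $\asymp n^{d-1}$ blocks $\bs$ with all $s_j\ge 1$, so that no coordinate vanishes). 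Consequently the convolution operator $T:=F_r\ast(\cdot)$ is a linear bijection of the $N$-dimensional space $\cT(\Delta Q_n)$ onto itself, all of whose eigenvalues have modulus $\asymp 2^{-rn}$, so $|\det T|^{1/N}\asymp 2^{-rn}$. Writing $U:=\{\ff\in\cT(\Delta Q_n):\|\ff\|_\infty\le 1\}$ for the domain $L_\infty$-ball, the image $A:=T(U)$ is a convex symmetric body contained in $\bW^r_\infty$.

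The core step is the volume lower bound for entropy numbers: covering $A$ by $2^k$ balls of radius $\e$ in the $L_1$ metric forces $2^k\e^N\,\mathrm{vol}(V)\ge\mathrm{vol}(A)$, where $V:=\{\ff\in\cT(\Delta Q_n):\|\ff\|_1\le 1\}$ and all volumes are Lebesgue measure on the $N$-dimensional coefficient space. Thus
\[
\e_k(A,L_1)\ \ge\ \Big(\frac{\mathrm{vol}(A)}{2^k\,\mathrm{vol}(V)}\Big)^{1/N}
= |\det T|^{1/N}\,2^{-k/N}\,\Big(\frac{\mathrm{vol}(U)}{\mathrm{vol}(V)}\Big)^{1/N}.
\]
Choosing $k:=[N/2]$, so that $2^{-k/N}\asymp 1$, gives $\e_k(\bW^r_\infty,L_1)\ge\e_k(A,L_1)\gg 2^{-rn}(\mathrm{vol}(U)/\mathrm{vol}(V))^{1/N}$. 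Finally, with $k\asymp N\asymp 2^n n^{d-1}$ one has $\log k\asymp n$ and $2^{-rn}\asymp(2^n n^{d-1})^{-r}n^{r(d-1)}\asymp k^{-r}(\log k)^{r(d-1)}$, which is exactly the claimed order; so the proof closes provided the volume ratio $(\mathrm{vol}(U)/\mathrm{vol}(V))^{1/N}$ is bounded below by a positive constant independent of $n$.

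I expect this last volume comparison to be the main obstacle. The inclusion $U\subseteq V$ gives the ratio $\le 1$ for free, but the matching lower bound --- that the $L_\infty$-ball and the $L_1$-ball of trigonometric polynomials with harmonics in the hyperbolic layer have Lebesgue volumes comparable up to a factor $C^N$ --- is a genuine volume estimate for balls of trigonometric polynomials on the hyperbolic cross (of the type developed by Kashin and Temlyakov), and this is where the geometry of $L_1$ enters. A tempting shortcut via random $\pm1$ coefficients together with Khintchine's inequality in $L_1$ does produce a well-separated subset of $\bW^r_\infty$, but keeping $\|\ff\|_\infty\le 1$ forces the coefficients down by the Salem--Zygmund bound, which costs a factor $\sqrt{\log k}$ and falls short of the sharp exponent; the volume method therefore seems essential.
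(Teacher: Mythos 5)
Your reductions are fine as far as they go: the monotonicity step $\bW^r_\infty\subseteq\bW^r_q$, the localization to the layer $\Delta Q_n$ with all $s_j\ge1$, the determinant computation $|\det T|^{1/N}\asymp 2^{-rn}$, and the volume--covering inequality are all correct and standard (the one small technical point --- that the covering centers need not lie in $\cT(\Delta Q_n)$ --- is repaired by the usual recentering at the cost of doubling $\e$). But the step you flagged as ``the main obstacle'' is not a missing lemma of Kashin--Temlyakov type that one can expect to import; for $d\ge 2$ it is \emph{false}. What is true for every spectrum $Q$ is the Kashin--Temlyakov estimate $(\mathrm{vol}(V))^{1/N}\asymp N^{-1/2}$ for the $L_1$-ball \cite{KaTe03}. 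For the $L_\infty$-ball on the hyperbolic layer, however, the small ball inequality of Talagrand \cite{Tal} (in its trigonometric form \cite{TE3}) forces in $d=2$ the strictly smaller order $(\mathrm{vol}(U))^{1/N}\ll N^{-1/2}(\log N)^{-1/2}$; this volume deficit is exactly the phenomenon behind the extra exponents $+1/2$ and $+1$ in (\ref{36.3}) and (\ref{36.4}). Consequently your scheme caps out at $\e_k(\bW^r_\infty,L_1)\gg k^{-r}(\log k)^{r(d-1)-1/2}$ in $d=2$, short of the claim by $(\log k)^{1/2}$, and for $d>2$ the comparison you need is tied to the open small ball problem mentioned in Section \ref{In}. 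Note also that your very first reduction is what creates the problem: passing to $q=\infty$ trades all the tractable cases for the single hardest endpoint.

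The repair splits into two regimes. For each fixed $q<\infty$ your argument can be closed without any deep volume theorem: replace $U$ by the $L_q$-ball of $\cT(\Delta Q_n)$ and observe, via Khintchine's inequality applied to random signs on a coefficient cube of side $c_qN^{-1/2}$, that $(\mathrm{vol}(B_q(\cT(\Delta Q_n))))^{1/N}\gg_q N^{-1/2}$; since $2^{-rn}\asymp k^{-r}(\log k)^{r(d-1)}$ for $k\asymp N$, this proves the theorem for all $1\le q<\infty$, for every $d$, with constants depending on $q$. The endpoint $q=\infty$, which is the full strength of the statement, genuinely requires a different mechanism --- this is precisely the contribution of Belinskii \cite{Bel} and Kashin--Temlyakov \cite{KTE2} recorded after (\ref{36.2}) --- and is not recoverable from single-layer $L_\infty$-ball volumes. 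For calibration: the paper under review does not prove this theorem at all; it cites it as known (\cite{VTbookMA}, p.~363, Theorem 7.7.2), so your attempt should be measured against those literature proofs, and against them it is a correct skeleton for finite $q$ with a genuine, unfixable-as-stated gap at $q=\infty$, $d\ge2$.
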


Note that because of the monotonicity of the $L_q$ norms we always have for $1 \le q,p \le \infty$
\be\label{L1}
\e_k(\bW^K_\infty,L_1) \le \e_k(\bW^K_q,L_p) \le \e_k(\bW^K_1,L_\infty).
\ee

Thus Theorems \ref{MT1} and \ref{Wlo}, combined with Lemma \ref{LL1} give the following bounds for $1\le q,p\le\infty$. Note, that the upper bounds hold under condition $r>1$. 

In the case $d=1$ 
\be\label{L2}
 k^{-2r} \ll \sup_{K\in \bH^{r,2}_{1,\infty}} \e_k(\bW^K_q,L_p) \ll k^{-2r} (\log 2k)^{2r};
 \ee
 and in the case $d=2$
 \be\label{L3}
 k^{-2r}(\log k)^{2r} \ll \sup_{K\in \bH^{r,4}_{1,\infty}} \e_k(\bW^K_q,L_p) \ll k^{-2r} (\log 2k)^{4r+5/2}.
 \ee
 In Section \ref{D} somewhat stronger lower bounds than in (\ref{L3}) are given for specific 
 pairs $q$ and $p$ (see (\ref{LB1}) -- (\ref{LB4})). 
 
 \section{Discussion}
\label{D}

We begin with a brief historical comments on the Kolmogorov widths   of the univariate classes 
$\bW^r_q$. We use the notation $W^r_q$ for these classes in order to emphasize that they are classes of the univariate functions. A more detailed history the reader can find, for instance, in \cite{VTbookMA}, p.79, Section 2.5.  
The Kolmogorov width was introduced in the Kolmogorov's
paper \cite{Ko36}.   The following  theorem is known (see, for instance, \cite{VTbookMA}, p.37).

\begin{Theorem}\label{aT4.1} Let $1\le q$,  $p\le \infty$,  $r > r(q, p)$,  then
$$
d_n (W_{q}^r, L_p)\asymp   n^{-r+\left(1/q-\max(1/2, 1/p)\right)_+},
$$
where
$$
r(q, p) := \begin{cases} (1/q-1/p)_+\qquad&\text{ for }1\le q\le p \le2; \qquad
1\le p\le q\le\infty, \\
\max(1/2, 1/q)\qquad&\text{ otherwise }.\end{cases}
$$
\end{Theorem}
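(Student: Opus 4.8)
The plan is to reduce the width $d_n(W_q^r,L_p)$ to a family of finite-dimensional width problems and then feed in the classical asymptotics for the Kolmogorov widths of an $\ell_q$-ball in an $\ell_p$-norm. The bridge is the dyadic decomposition $f=\sum_{s\ge0}A_s(f)$ built from the operators $A_s$ of Section \ref{fc}, together with the Marcinkiewicz--Zygmund discretization of trigonometric polynomials on an equally spaced grid, which lets one pass between the $L_p$-norm of a block of degree $\asymp N$ and a power of $N$ times the $\ell_p^N$-norm of its sample vector. The single deep external ingredient is the two-sided asymptotics of $d_m(B_q^N,\ell_p^N)$, the width of the unit ball of $\ell_q^N$ measured in $\ell_p^N$; it is here that the gain $\xi(q,p)=(1/q-\max(1/2,1/p))_+$ is created.

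For the upper bound I would use the block estimate $\|A_s(f)\|_q\ll2^{-rs}$ for $f\in W_q^r$ (from $\|\varphi\|_q\le1$ and $\|A_s(F_r)\|_1\asymp2^{-rs}$), noting that $A_s(f)$ is a polynomial of degree $\asymp2^s=:N_s$. Discretizing, block $s$ becomes a vector subject to a scaled $\ell_q^{N_s}$-constraint of size $\asymp N_s^{1/q}2^{-rs}$, and its best $L_p$-approximation from an $n_s$-dimensional subspace is $\asymp N_s^{-1/p}N_s^{1/q}2^{-rs}d_{n_s}(B_q^{N_s},\ell_p^{N_s})$. One then chooses the allocation $\{n_s\}$ with $\sum_s n_s\le n$ to minimize the total error $\sum_s(\cdots)$. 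Because $d_{n_s}(B_q^{N_s},\ell_p^{N_s})$ behaves like the naive coordinate bound only for $p\le2$ and is markedly smaller for $p>2$ (the Kashin effect, which replaces $1/p$ by $1/2$ in the effective exponent), the optimized sum is dominated by the single scale $2^{s}\asymp n$ and equals $n^{-r+\xi(q,p)}$ rather than the weaker $n^{-r+(1/q-1/p)_+}$. The hypothesis $r>r(q,p)$ is exactly what forces the geometric convergence of this sum and the dominance of that scale; in the ``otherwise'' range ($q<p$, $p>2$) the stronger threshold $\max(1/2,1/q)$ is what tames the tail once the Kashin gain is used.

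For the lower bound I would reverse the correspondence at one well-chosen scale. Fixing $s$ with $N_s\asymp n$, the multiplier acting as $\approx2^{rs}$ on the frequencies in $\rho(s)$ shows that the scaled ball $\{g\in\cT(\rho(s)):\|g\|_q\le c2^{-rs}\}$ is contained in $W_q^r$. Since the Kolmogorov width does not increase on subsets and is compatible (up to constants) with this discretized embedding, $d_n(W_q^r,L_p)\gg2^{-rs}N_s^{1/q-1/p}d_n(B_q^{N_s},\ell_p^{N_s})$, and the matching lower bound for the finite-dimensional width completes the estimate.

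The main obstacle is entirely inside the finite-dimensional widths $d_m(B_q^N,\ell_p^N)$ in the regime $q<2<p$: neither coordinate subspaces nor elementary duality are optimal, and one needs random subspaces (Kashin) for the upper bound and the dual volumetric/Gaussian arguments (Gluskin, Garnaev--Gluskin) for the lower bound. This is the source of the $\max(1/2,1/p)$ in the exponent and the reason the result lies far beyond one-variable Jackson--Bernstein theory. The remaining care is organizational: splitting into the ranges $q\le p\le2$, $p\le q$, and $q\le2\le p$ (plus the leftover configurations), supplying each with the correct finite-dimensional input, and checking in each that the allocation optimization and the smoothness threshold $r>r(q,p)$ interlock; the decomposition $\sum_s A_s(f)$ handles the rest.
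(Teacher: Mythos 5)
You should first note that the paper contains no proof of Theorem \ref{aT4.1}: it is quoted in Section \ref{D} as a known result (with a pointer to \cite{VTbookMA}, p.~37) together with a history attributing the various ranges of $(q,p)$ to Kolmogorov \cite{Ko36}, Rudin \cite{Ru2}, Stechkin \cite{St}, Babadzhanov--Tikhomirov \cite{BaT}, Makovoz \cite{Mak}, Ismagilov \cite{I}, Gluskin \cite{Gl1}, and Kashin \cite{Ka}. Your plan is exactly the proof found in those sources and in \cite{VTbookMA}: dyadic blocks $A_s(f)$ with $\|A_s(f)\|_q \ll 2^{-rs}$, Marcinkiewicz--Zygmund discretization reducing each block to the finite-dimensional width $d_m(B_q^N,\ell_p^N)$, Kashin's random-subspace upper bounds and the Gluskin/Garnaev--Gluskin lower bounds in the regime $q<2<p$ (the source of $\max(1/2,1/p)$ in the exponent), and an optimized allocation $\{n_s\}$ whose geometric convergence is precisely the hypothesis $r>r(q,p)$. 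Two points you gloss over, both standard but necessary: (i) in the lower bound, to carry an arbitrary $n$-dimensional approximating subspace of $L_p$ down to $\ell_p^N$ you must first apply a de la Vall\'ee Poussin-type projection onto the polynomial space, since the orthogonal projection is unbounded at the endpoints $p=1,\infty$ and point evaluation is not defined on $L_p$; and (ii) at the chosen scale you need the block dimension strictly larger than $n$, say $N_s\asymp 4n$, because $d_n(B_q^{N},\ell_p^{N})$ degenerates when $N\le n$. With these routine repairs your sketch is a correct reconstruction of the classical argument; it is neither more elementary than, nor different from, the proof the paper cites, so there is nothing to compare against internally.
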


The results connected with Theorem \ref{aT4.1}
were obtained by a number of authors. Exact values of widths
have been obtained in some cases but we shall not discuss these results here.
The first result was obtained by Kolmogorov \cite{Ko36}
($q=p=2$, in this case the exact values of the widths were obtained).
Rudin \cite{Ru2} investigated the case $r=1$, $q=1$, $p=2$.
Stechkin \cite{St} generalized the Rudin's result to all $r$ and
investigated the case $q=p=\infty$. For $1\le q=p<\infty$ the
orders of the widths were obtained by Babadzhanov and
Tikhomirov~\cite{BaT}. Makovoz \cite{Mak} investigated the case
$1\le p< q\le\infty$. Ismagilov \cite{I} found the orders of the
widths for $1\le q<p\le 2$ and proved the estimate
$$
d_m(W_1^2,L_{\infty})\le C(\varepsilon)m^{-6/5+\varepsilon},
\qquad \varepsilon>0.
$$
In particular, this estimate shows that in the case $q=1$,
$p=\infty$ the subspace $\cT(n)$ of trigonometric polynomials is not
 optimal from the point of view of the Kolmogorov widths.
This result was the first of such kind. The orders of the widths in
the case $q=1$, $p>2$, $r\ge 2$ were obtained by Gluskin \cite{Gl1}.
For $1<q<p$, $p>2$ the orders of the widths were obtained by
Kashin \cite{Ka}, who developed a new breakthrough technique.

In the case $d>1$ the following Theorem \ref{Ts7.1} is known (see, for instance, \cite{VTbookMA}, p.216). A more detailed discussion of the corresponding results the reader can find, for instance, in \cite{VTbookMA}, Ch.5  and in \cite{DTU}.  

\begin{Theorem}\label{Ts7.1} Let $r(q,p)$ be the same as in Theorem \ref{aT4.1} above.
Then
$$
d_n(\bW_q^r,L_p) \asymp
\begin{cases}
\left( \frac{(\log n)^{d-1}}{n}\right)^{r-\bigl(1/q-\max(1/2,1/p)\bigr)_+}
\text{ for}&1<q,p<\infty,\\
& r>r(q,p)\\
\left( \frac{(\log n)^{d-1}}{n}\right)^{r-1/2}(\log m)^{(d-1)/2}
\text{ for}&q=1,\\
&2\le p<\infty,\ r>1.
\end{cases}
$$
\end{Theorem}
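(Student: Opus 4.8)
The plan is to establish the matching upper and lower bounds separately, using as the organizing principle the dyadic block decomposition $f=\sum_\bs A_\bs(f)$ together with the hyperbolic cross structure. For $f\in\bW^r_q$ (the $d$-variate class with all smoothness indices equal to $r$), standard block estimates analogous to Theorem \ref{H} give $\|A_\bs(f)\|_q \ll 2^{-r\|\bs\|_1}$, and for $1<q<\infty$ the Littlewood--Paley theory supplies the sharper square-function control $\|(\sum_\bs |A_\bs(f)|^2)^{1/2}\|_q\asymp\|f\|_q$, which is what is needed for sharpness in the first regime. The energy of $f$ is thus concentrated on the hyperbolic cross $Q_n$ up to a controlled tail, and the whole argument reduces to distributing a fixed budget of $n$ approximating dimensions across the levels $\rho(\bs)$ optimally. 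The characteristic factor $(\log n)^{d-1}$ will emerge from counting the blocks $\bs$ with $\|\bs\|_1\approx\log n$, of which there are $\asymp(\log n)^{d-1}$.

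For the upper bound I would first discretize. On each block $\rho(\bs)$ a Marcinkiewicz-type theorem identifies, up to constants, the $L_p$ norm of a polynomial in $\cT(\rho(\bs))$ with a weighted $\ell_p^{N_\bs}$ norm of its coefficients or of its values on a suitable net, where $N_\bs:=|\rho(\bs)|\asymp 2^{\|\bs\|_1}$. This converts the approximation of $A_\bs(f)$ into a finite-dimensional problem whose relevant quantity is the Kolmogorov width $d_{m_\bs}(B_{\ell_q^{N_\bs}},\ell_p^{N_\bs})$ of the image of the $\ell_q$ ball in the $\ell_p$ metric. These finite-dimensional widths are classically known (Kashin, Gluskin, Garnaev--Gluskin). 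Assigning $m_\bs$ dimensions to level $\bs$, summing the resulting errors against the block bounds $2^{-r\|\bs\|_1}$, and minimizing over all allocations with $\sum_\bs m_\bs\le n$ yields the stated rate: the exponent $r-(1/q-\max(1/2,1/p))_+$ is exactly what the scalar width contributes per scale, while the logarithmic power comes from the block count.

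For the lower bound I would exploit that $\bW^r_q$ contains a scaled copy of the ball of each $\cT(\rho(\bs))$: by the Bernstein inequality the set $\{g\in\cT(\rho(\bs)):\|g\|_q\le c\,2^{-r\|\bs\|_1}\}$ lies in a constant multiple of the class. Applying the known lower bounds for $d_m(B_{\ell_q^{N}},\ell_p^{N})$ on a single well-chosen level, or on a subspace spanning a band of levels with $\|\bs\|_1\approx\log n$, produces a lower bound of the correct order once the number of such levels is accounted for. Combining the two bounds gives the asymptotic equivalence in the regime $1<q,p<\infty$.

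The hard part is the endpoint $q=1$ (with $2\le p<\infty$, $r>1$), which is responsible for the extra factor $(\log n)^{(d-1)/2}$ (the $(\log m)^{(d-1)/2}$ in the statement, with $m=n$). Here the relevant finite-dimensional widths $d_m(B_{\ell_1^{N}},\ell_p^{N})$ for $p\ge 2$ are no longer given by the generic formula and are governed by Kashin's probabilistic choice of subspaces, which supplies bounds of the type $d_m(B_{\ell_1^{N}},\ell_2^{N})\ll \big((\log(eN/m))/m\big)^{1/2}$. Each scale then carries an additional $\sqrt{\log}$ factor, and summing these over the $\asymp(\log n)^{d-1}$ blocks of the hyperbolic cross is what generates the half-power $(\log n)^{(d-1)/2}$. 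The main obstacle is thus twofold: obtaining and correctly applying the sharp Kashin--Gluskin estimates at the $q=1$ endpoint, and carrying out the delicate optimization of the allocation $\{m_\bs\}$ so that both the power of $n$ and the power of $\log n$ come out sharp. The matching lower bound in this regime similarly rests on Gluskin-type volume estimates rather than on elementary test functions.
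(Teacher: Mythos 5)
You should first be aware that the paper contains no proof of Theorem \ref{Ts7.1}: it is quoted as a known result, with attributions to Babenko, Mityagin, Galeev, Temlyakov, Gluskin and Kashin, and a pointer to \cite{VTbookMA}, p.~216. So your proposal can only be measured against the proofs in that cited literature. For the regime $1<q,p<\infty$ your architecture is exactly the standard one and is sound at the sketch level: Littlewood--Paley blocks with $\|A_\bs(f)\|_q\ll 2^{-r\|\bs\|_1}$, Marcinkiewicz-type discretization reducing matters to $d_m(B_{\ell_q^N},\ell_p^N)$, the Kashin/Gluskin/Garnaev--Gluskin finite-dimensional width estimates, optimization of the allocation $\{m_\bs\}$, and Bernstein-based embeddings of scaled block balls (or bands of blocks at $\|\bs\|_1\approx\log n$) for the lower bounds. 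The restriction $r>r(q,p)$ is precisely what makes the tail of the allocation sum converge, which you gesture at but do not verify.

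The genuine gap is in the $q=1$ case, and it is twofold. First, both tools you rely on disappear at the endpoint: the square-function equivalence $\|(\sum_\bs|A_\bs(f)|^2)^{1/2}\|_q\asymp\|f\|_q$ fails in $L_1$, and there is no Marcinkiewicz-type discretization of the $L_1$ norm, so the step ``convert $A_\bs(f)$ into a ball of $\ell_1^{N_\bs}$'' cannot be executed as described. The proofs in \cite{Tem5}, \cite{Tem10} instead exploit the integral-operator structure of the class: $\bW^r_1$ is essentially the closed convex hull of the translates $F_r(\cdot-\by)$ of the Bernoulli kernel, so its widths reduce to widths of a discretized family of translates, and the octahedron $B_{\ell_1^N}$ arises from taking the convex hull of that finite family; only then do the Kashin/Garnaev--Gluskin upper bounds and Gluskin's volume-type lower bounds for octahedra apply, as you correctly anticipate for the lower bound. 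Second, your accounting for the extra half power of the logarithm is wrong as stated: summing an additional $\sqrt{\log}$ factor over the $\asymp(\log n)^{d-1}$ blocks \emph{adds} errors and cannot produce a multiplicative factor $(\log n)^{(d-1)/2}$. (You are right, incidentally, that $(\log m)^{(d-1)/2}$ in the statement is a typo for $(\log n)^{(d-1)/2}$.) The half power in fact enters through the $L_2$-normalization of the \emph{union} of the $\asymp\ell^{d-1}$ blocks at the critical level $\ell\approx\log n$: the relevant dimension is $N_\ell\asymp 2^\ell\ell^{d-1}$, and the discretization constant carries $N_\ell^{1/2}=2^{\ell/2}\ell^{(d-1)/2}$, i.e.\ the factor is the square root of the block count, not an accumulation of per-scale $\sqrt{\log}$ terms. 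Rewriting the target bound as $n^{-(r-1/2)}(\log n)^{(d-1)r}$ makes the correct bookkeeping transparent and is a useful check that your allocation optimization, once repaired along these lines, lands on the stated exponent.
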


Theorem \ref{Ts7.1} for $q=p=2$ was proved by Babenko \cite{Ba1}, for
$1<q=p<\infty$ by Mityagin \cite{Mi} ($r$ natural) and by
Galleev \cite{Ga1} ($r$ arbitrary); for $1<q<p\le 2$ by
Temlyakov \cite{Tem2}, \cite{Tem6}; for $1\le q<p<\infty$, $2\le p<\infty$
by Temlyakov \cite{Tem5}, \cite{Tem10}; and for $1<p<q<\infty$ by Galeev \cite{Ga3}.

We now give some comments on the known results about entropy numbers. The reader can find further discussion 
in the books \cite{LGM}, Ch.15, \cite{VTbookMA}, Ch.7, and \cite{VTbook}, Ch.3 and in the papers \cite{Schu}, \cite{TE3}, \cite{VT155}.  

The concept of entropy was introduced by Kolmogorov in \cite{Ko56}. It is well known (see \cite{KoTi} and \cite{BS}) that in the univariate case ($d=1$)  
\begin{equation}\label{36.1}
\e_k(W^r_{q},L_p)\asymp k^{-r}
\end{equation}
 holds for all $1\le q,p \le \infty$ and $r>(1/q-1/p)_+$. 
 We note that condition $r>(1/q-1/p)_+$ is a necessary and sufficient condition for compact embedding of $\bW^r_q$ into $L_p$. Thus, (\ref{36.1}) provides a complete description of the rate of $\e_k(W^r_q,L_p)$ in the univariate case. We point out that (\ref{36.1}) shows that the rate of decay of $\e_k(W^r_q,L_p)$ depends only on $r$ and does not depend on $q$ and $p$. In this sense the strongest upper bound (for $r>1$) is $\e_k(W^r_{1},L_\infty) \ll k^{-r}$ and the strongest lower bound is $\e_k(W^r_{\infty},L_1)\gg k^{-r}$. 

There are different generalizations of classes $W^r_q$ to the case of multivariate functions. In this section we only discuss known results for classes $\bW^r_q$ of functions with bounded mixed derivative.  
 
The problem of estimating $\e_k(\bW^r_q,L_p)$ has a long history. The first result on the right order of $\e_k(\bW^r_{2},L_2)$ was obtained by Smolyak \cite{Smo}. Later (see \cite{TE1}, \cite{TE2} and \cite{VTbookMA}, Ch.7) it was established that
\begin{equation}\label{36.2}
\e_k(\bW^r_q,L_p)\asymp k^{-r}(\log k)^{r(d-1)} 
\end{equation}
holds for all $1<q,p<\infty$, $r>1$. The case $1<q=p<\infty$, $r>0$ was established by Dinh Dung \cite{DD}. Belinskii \cite{Bel} extended (\ref{36.2}) to the case $r>(1/q-1/p)_+$ when $1<q,p<\infty$. 

It is known in approximation theory (see \cite{VTbookMA}) that investigation of asymptotic characteristics of classes $\bW^r_q$ in $L_p$ becomes more difficult when $q$ or $p$ takes value $1$ or $\infty$ than when $1<q,p<\infty$. It turns out to be the case for $\e_k(\bW^r_q,L_p)$ as well. It was discovered that in some of these extreme cases ($q$ or $p$ equals $1$ or $\infty$) relation (\ref{36.2}) holds and in other cases it does not hold. We describe the picture in detail. It was proved in \cite{TE2} that (\ref{36.2}) holds for $p=1$, $1<q<\infty$, $r>0$. It was also proved that (\ref{36.2}) holds for $p=1$, $q=\infty$ (see \cite{Bel} for $r>1/2$ and  \cite{KTE2} for $r>0$). Summarizing, we state that (\ref{36.2}) holds for $1<q,p<\infty$ and $p=1$, $1<q\le\infty$ for all $d$ (with appropriate  restrictions on $r$). This easily implies that (\ref{36.2}) also holds for $q=\infty$, $1\le p<\infty$. For all other pairs $(q,p)$, namely, for $p=\infty$, $1\le q\le\infty$ and $q=1$, $1\le p\le \infty$ the rate of $\e_k(\bW^r_q,L_p)$ is not known in the case $d>2$. It is an outstanding open problem. 

In the case $d=2$ this problem is essentially solved. We now cite the corresponding results. The first result on the right order of $\e_k(\bW^r_q,L_p)$ in the case $p=\infty$ was obtained by Kuelbs and Li \cite{KL} for $q=2$, $r=1$. It was proved in \cite{TE3} that
\begin{equation}\label{36.3}
\e_k(\bW^r_q,L_\infty)\asymp k^{-r}(\log k)^{r+1/2}
\end{equation}
holds for $1<q<\infty$, $r>1$. We note that the upper bound in (\ref{36.3}) was proved under condition $r>1$ and the lower bound in (\ref{36.3}) was proved under condition $r>1/q$. Belinskii \cite{Bel} proved the upper bound in (\ref{36.3}) for $1<q<\infty$ under condition $r>\max(1/q,1/2)$.   Relation (\ref{36.3}) for $q=\infty$ under assumption $r>1/2$ was proved in  \cite{TE4}. 

The case $q=1$, $1\le p\le \infty$ was settled by Kashin and Temlyakov \cite{KaTe03}. The authors proved that 
\begin{equation}\label{36.3'} 
\e_k(\bW^r_{1},L_p)\asymp k^{-r}(\log k)^{r+1/2}
\end{equation}
holds for $1\le p<\infty$, $r>\max(1/2,1-1/p)$ and for even $r\in \bbN$
\begin{equation}\label{36.4} 
\e_k(\bW^r_{1},L_\infty)\asymp k^{-r}(\log k)^{r+1}.
\end{equation}

Lemma \ref{LL1} allows us to get from relations (\ref{36.3}) -- (\ref{36.4}) some lower bounds, which are stronger than the ones contained in (\ref{L3}). Lemma \ref{LL1} and  
(\ref{36.3}) imply
 \be\label{LB1}
 k^{-2r}(\log k)^{2r+1/2} \ll \sup_{K\in \bH^{r,4}_{1,\infty}} \e_k(\bW^K_q,L_\infty),\quad 1<q<\infty,\quad r>1/q  
 \ee
and
\be\label{LB2}
 k^{-2r}(\log k)^{2r+1/2} \ll \sup_{K\in \bH^{r,4}_{1,\infty}} \e_k(\bW^K_\infty,L_\infty),\quad   r>1/2.  
 \ee
 Lemma \ref{LL1} and  (\ref{36.3'}) imply for $1\le p<\infty$, $ r>\max(1/2,1-1/p)$
 \be\label{LB3}
 k^{-2r}(\log k)^{2r+1/2} \ll \sup_{K\in \bH^{r,4}_{1,\infty}} \e_k(\bW^K_1,L_p)  .  
 \ee
 Finally, Lemma \ref{LL1} and  (\ref{36.4}) imply that in the case of even $r\in \bbN$ we have
 \be\label{LB4}
 k^{-2r}(\log k)^{2r+1} \ll \sup_{K\in \bH^{r,4}_{1,\infty}} \e_k(\bW^K_1,L_\infty).  
 \ee

 We now give a comparison of the new results on the entropy numbers with the known results on the Kolmogorov widths. Comparing Theorem \ref{aT4.1} with (\ref{36.1}), we see that the behavior of the Kolmogorov widths and the entropy numbers is very different. For instance, in the case $2\le q,p\le \infty$ they have the same rate of decay but in the case 
 $1\le q <2$, $q<p\le \infty$ the entropy numbers are much smaller than the Kolmogorov widths. 
 There are several general results, which give 
upper estimates on the entropy numbers $\e_k(F,X)$ in terms of the Kolmogorov widths $d_n(F,X)$   (see \cite{C}, \cite{VTbook}, p.169, Theorem 3.23, and \cite{VTbookMA}, p.328, Section 7.4). Carl's 
inequality states: For any $r>0$ we have
\begin{equation}\label{D1}
\max_{1\le k \le n} k^r \e_k(F,X) \le C(r) \max _{1\le m \le n} m^r d_{m-1}(F,X).
\end{equation}

We discuss the case $d=1$ and $2\le q,p\le \infty$. We assume that $r$ is large enough for Theorem \ref{InT1} to hold. Then, taking into account that in the case $2\le q,p\le \infty$ we have $\xi(q,p)=0$ we obtain from Theorem \ref{InT1} and inequality (\ref{D1}) the following upper bound
\begin{equation}\label{D2}
\sup_{K \in \bH^{r,2}_{1,1}} \e_k(\bW^K_q,L_p) \ll k^{-2r}.
\ee
Clearly, $\bH^{r,2}_{1,\infty} \subset \bH^{r,2}_{1,1}$. Therefore, the upper bound (\ref{D2}) and the lower bound 
(\ref{L2}) imply the following result.

\begin{Theorem}\label{DT1} Let $d=1$, $2\le q,p\le \infty$, and $1\le w \le \infty$. Then for $r>3/2$ we have
\begin{equation}\label{D3}
\sup_{K \in \bH^{r,2}_{1,w}} \e_k(\bW^K_q,L_p) \asymp k^{-2r}.
\ee
\end{Theorem}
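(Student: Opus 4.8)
The plan is to obtain the two-sided bound (\ref{D3}) by a monotonicity (squeeze) argument that reduces the whole range $1\le w\le\infty$ to the two endpoints $w=1$ and $w=\infty$, for which the estimates (\ref{D2}) and (\ref{L2}) are already available. The only new ingredient is the nesting of the collections $\bH^{r,2}_{1,w}$ as $w$ varies; once that is in hand, a single upper bound and a single lower bound pinch off the entire family.

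First I would record the nesting. For $v=2$ the vector norm is $\|f\|_{(1,w)}=\bigl\|\,\|f(\cdot,x_2)\|_{L_1(dx_1)}\,\bigr\|_{L_w(dx_2)}$, and since $\bbT$ carries a probability measure the $L_w$ norm increases with $w$; hence $\|f\|_{(1,w)}$ is nondecreasing in $w$. The defining inequalities (\ref{Bi9}) of $\bH^{r,2}_{1,w}$ therefore become more restrictive as $w$ grows, so the classes shrink:
$$
\bH^{r,2}_{1,\infty}\subseteq \bH^{r,2}_{1,w}\subseteq \bH^{r,2}_{1,1},\qquad 1\le w\le\infty,
$$
the outer inclusion being the one already noted. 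Taking the supremum of $\e_k(\bW^K_q,L_p)$ over each collection and using monotonicity of the supremum under inclusion gives
$$
\sup_{K\in \bH^{r,2}_{1,\infty}}\e_k(\bW^K_q,L_p)\le \sup_{K\in \bH^{r,2}_{1,w}}\e_k(\bW^K_q,L_p)\le \sup_{K\in \bH^{r,2}_{1,1}}\e_k(\bW^K_q,L_p).
$$

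It then remains to close the sandwich. For the right endpoint, (\ref{D2}) yields $\sup_{K\in \bH^{r,2}_{1,1}}\e_k(\bW^K_q,L_p)\ll k^{-2r}$ in the regime $2\le q,p\le\infty$; this is precisely where $r>3/2$ enters, since (\ref{D2}) rests on Theorem \ref{InT1}, whose hypothesis $\br>(1,1+\max(1/2,1/q))$ becomes $r>3/2$ when $q\ge 2$ (and simultaneously forces $\xi(q,p)=0$, producing the exponent $-2r$). For the left endpoint, the lower half of (\ref{L2}) gives $\sup_{K\in \bH^{r,2}_{1,\infty}}\e_k(\bW^K_q,L_p)\gg k^{-2r}$; this half follows from Lemma \ref{LL1} applied to the Bernoulli kernel $F_{2r}$ together with Theorem \ref{Wlo}, and holds well inside the range $r>3/2$. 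Both extremes being of order $k^{-2r}$, every intermediate supremum is squeezed to $\asymp k^{-2r}$, which is exactly (\ref{D3}). As the argument is just an inclusion followed by a pinch, there is no serious obstacle internal to this proof; the only points needing care are verifying that larger $w$ yields the smaller class (so the nesting points the right way) and confirming that the threshold $r>3/2$ originates on the upper-bound side, so the stated range is the correct one.
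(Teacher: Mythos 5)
Your proof is correct and takes essentially the same route as the paper: the upper bound (\ref{D2}) for the largest class $\bH^{r,2}_{1,1}$ (Theorem \ref{InT1} plus Carl's inequality (\ref{D1})), the lower bound in (\ref{L2}) for the smallest class $\bH^{r,2}_{1,\infty}$ (Lemma \ref{LL1} plus Theorem \ref{Wlo}), and the monotone nesting $\bH^{r,2}_{1,\infty}\subseteq \bH^{r,2}_{1,w}\subseteq \bH^{r,2}_{1,1}$ to squeeze every intermediate supremum. Your explicit verification that the hypothesis $\br>(1,1+\max(1/2,1/q))$ of Theorem \ref{InT1} reduces to $r>3/2$ and that $\xi(q,p)=0$ when $q\ge 2$ merely spells out what the paper leaves implicit.
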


Note, that the upper bound in Theorem \ref{DT1}, which was derived from Theorem \ref{InT1},  gives a slightly better result than in (\ref{L2}). The proof of Theorem \ref{InT1} is based on the connection between the Kolmogorov widths of classes $\bW^K_q$ and the best bilinear approximations of the kernel $K$. Then deep results on the bilinear approximations are used. We explain that important connection in one simple case. For $d\in \bbN$ consider the dictionary
\be\label{Bi6}
 \Pi(d) := \{u(\bx)v(\by)\,:\, \|u\|_{L_2(\bbT^d)}=\|v\|_{L_2(\bbT^d)}=1\},
\ee
where the parameter $d$ in $ \Pi(d)$  indicates that the functions $u$ and $v$ are functions of $d$ variables. 
Define the best $m$-term bilinear approximation of $K(\bx,\by)$, $\bx =(x_1,\dots,x_d)$, $\by =(y_1,\dots,y_d)$, in the norm $L_\bp$ as follows
$$
\sigma_m(K,\Pi(d))_\bp := \inf_{c_j, g_j\in \Pi(d)}\left\|K(\bx,\by) -\sum_{j=1}^m c_jg_j(\bx,\by)\right\|_{L_\bp}.
$$

 \begin{Remark}\label{BIR0} Note that in a number of papers the following notation is used for the best $m$-term bilinear approximation
 \be\label{tau}
 \tau_m(f)_\bp := \sigma_m(f,\Pi(d))_\bp.
 \ee
 \end{Remark}

It is easy to check (see \cite{VT26}) that for a continuous function $K(\bx,\by)$ one has 
$$
d_n(\bW^K_1,L_p) = \tau_n(K)_{p,\infty}.
$$

{\bf Conclusions.} The known result -- Theorem \ref{InT1} -- gives the right order of the characteristic $d_n(\bH^{r,2}_{1,1},L_q,L_p)$ (actually, $d_n(\bH^{r,2}_{1,w},L_q,L_p)$, $1\le w\le \infty)$. The main results of our paper -- relations (\ref{L2}) and (\ref{L3}) -- give  upper bounds, which are close to the lower bounds, for the characteristics $\e_k(\bH^{r,2}_{1,\infty},L_q,L_p)$ and $\e_k(\bH^{r,4}_{1,\infty},L_q,L_p)$ for all $1\le q,p\le\infty$. In the case $2\le q,p\le\infty$ Theorem \ref{DT1} is stronger than (\ref{L2}) in the following sense. (1) It provides the right upper bound, which coincides (in the sense of order) with the lower bound. (2) It gives the upper bound for the wider class $\bH^{r,2}_{1,1}$ than the class $\bH^{r,2}_{1,\infty}$ in (\ref{L2}). However, Theorem \ref{MT1} is much stronger than the corresponding bounds obtained from Theorem \ref{InT1} by the Carl's inequality (\ref{D1}). 
In this regards we formulate three specific open problems.

{\bf Open problem 1. Conjecture.} For large enough $r$ prove
 \be\label{OP1}
 \sup_{K\in \bH^{r,2}_{1,\infty}} \e_k(\bW^K_1,L_\infty) \ll k^{-2r}.
 \ee
 
 {\bf Open problem 2.} Could we replace in Theorem \ref{MT1} (see (\ref{Md1})) the class $\bH^{r,2}_{1,\infty}$ by 
 a larger class $\bH^{r,2}_{1,1}$? 
 
 {\bf Open problem 3.} Assume that Conjecture in Open problem 1 is correct. Could we replace in (\ref{OP1})   the class $\bH^{r,2}_{1,\infty}$ by a larger class $\bH^{r,2}_{1,1}$? 
 
 Note, that in the proof of Theorem \ref{MT1} we used deep known results on the entropy numbers and only added 
 relatively elementary new arguments. As we pointed out above, the proof of Theorem \ref{InT1} is based on the connection between the Kolmogorov widths of classes $\bW^K_q$ and the best bilinear approximations of the kernel $K$ and on deep results on the bilinear approximations. 
 Certainly, it would be interesting to prove analogs of Theorem \ref{InT1} for $d>1$ and Theorem \ref{MT1} for $d>2$ with as sharp logarithmic factors as possible. Also, it would be interesting to study other asymptotic characteristics, for instance, errors of optimal numerical integration and errors of optimal sampling recovery.  
 
  {\bf Acknowledgements.}  
This work was supported by the Russian Science Foundation under grant no. 23-71-30001, https://rscf.ru/project/23-71-30001/, and performed at Lomonosov Moscow State University.

 \Addresses
 
\end{document}